\numberwithin{equation}{section}
\newtheorem{theorem}{Theorem}[section]
\newtheorem{corollary}[theorem]{Corollary}
\newtheorem{lemma}[theorem]{Lemma}
\theoremstyle{definition}
\newtheorem{definition}[theorem]{Definition}
\newtheorem{remark}[theorem]{Remark}
\let\oldmarginpar\marginpar
\renewcommand\marginpar[1]{\-\oldmarginpar[\raggedleft\footnotesize #1]%
  {\raggedright\footnotesize #1}}
\def\eps{\varepsilon}
\def\R{\mathbb{R}}
\def\loc{\text{\rm loc}}
\def\square{\hbox{$\sqcap\kern-7pt\sqcup$}}
\def\be{\begin{equation}}
\def\ee{\end{equation}}
\def\bea{\begin{eqnarray}}
\def\eea{\end{eqnarray}}
\def\ˆ{^{•}}
 \newcommand{\dive}{{\rm{div}}}
\newcommand{\La}{\mathcal{L}}
\newcommand{\varp}{\varphi}  
\newcommand{\om}{\Omega} 
\newcommand{\p}{\partial} 
\newcommand{\la}{\lambda} 
\newcommand{\IR}{\mathbb{R}}
\newcommand{\irn}{\IR^N}
\newcommand{\ID}{\mathcal{D}}
\title{A note on the Lagrangian flow \\ associated to a partially regular vector field}
\author{Gianluca Crippa, Silvia Ligabue}
\def\adresse{
\begin{description}
\item[G. Crippa:] 
Departement Mathematik und Informatik,\\ Universit\"{a}t Basel,
Spiegelgasse 1,
CH-4051 Basel, Switzerland\\
E-mail: \texttt{gianluca.crippa@unibas.ch }

\item[S. Ligabue:] Departement Mathematik und Informatik,\\ Universit\"{a}t Basel,
Spiegelgasse 1,
CH-4051 Basel, Switzerland\\
E-mail: \texttt{silvia.ligabue@unibas.ch}

\end{description}
}
\date{\today}
\begin{document}

\maketitle

\begin{abstract}
\noindent In this paper we derive quantitative estimates for the Lagrangian flow associated to a partially regular vector field of the form
$$
b(t,x_1,x_2) = (b_1(t,x_1),b_2(t,x_1,x_2)) \in\R^{n_1}\times\R^{n_2} \,, \qquad (x_1,x_2)\in\R^{n_1}\times\R^{n_2}\,.
$$
We assume that the first component $b_1$ does not depend on the second variable $x_2$, and has Sobolev~$W^{1,p}$ regularity in the variable $x_1$, for some $p>1$. On the other hand, the second component $b_2$ has Sobolev  $W^{1,p}$ regularity in the variable $x_2$, but only fractional Sobolev $W^{\alpha,1}$ regularity in the variable $x_1$, for some $\alpha>1/2$. These estimates imply well-posedness, compactness, and quantitative stability for the Lagrangian flow associated to such a vector field. 
\end{abstract}

\section{Introduction}

The transport equation
\begin{equation}\label{e:transport}
\partial_t u + b \cdot \nabla u=0
\end{equation}
is one of the basic building blocks for several (often nonlinear) partial differential equations (PDEs) from mathematical physics, most notably from fluid dynamics, conservation laws, and kinetic theory. In~\eqref{e:transport} the vector field $b = b(t,x) : (0,T) \times \R^N \to \R^N$ is assumed to be given, hence~\eqref{e:transport} is a linear equation for the unknown $u = u(t,x) : (0,T) \times \R^N \to \R$, with a prescribed initial datum~$u(0,x) = \bar u (x)$. Physically, the solution $u$ is advected by the vector field $b$. In most applications~\eqref{e:transport} is coupled to other PDEs, and moreover the vector field is often not prescribed, but rather depends on the other physical quantities present in the problem. Nevertheless, a thorough understanding of the linear equation~\eqref{e:transport} is often the basic step for the treatment of such nonlinear cases.

If the vector field is regular enough (Lipschitz in the space variable, uniformly with respect to time) the well-posedness of~\eqref{e:transport} is classically well-understood and is based on the theory of characteristics and on the connection with the ordinary differential equation (ODE)
\begin{equation}\label{e:ode}
\left\{\begin{array}{l}
\displaystyle\frac{d}{dt} X(s,x) = b(s,X(s,x)) \\ \\ 
X(0,x) = x \,.
\end{array}\right.
\end{equation}
The map $X = X(t,x) : (0,T) \times \R^N \to \R^N$ is the (classical) flow associated to the vector field $b$.

When dealing with problems originating from mathematical physics, however, the regularity available on the advecting vector field is often much lower than Lipschitz, and this prevents the application of the classical theory. The low regularity of the vector field usually accounts for ``chaotic'' and ``turbulent'' behaviours of the system. This is the reason why in the last few decades a systematic study of~\eqref{e:transport} and~\eqref{e:ode} out of the Lipschitz regularity setting has been carried out. We mention in particular the seminal papers by DiPerna and Lions~\cite{DPL} and Ambrosio~\cite{Amb}, where respectively Sobolev and bounded variation regularity have been assumed on the vector field, together with assumptions of boundedness of the (distributional) spatial divergence and on the growth of the vector field. We will now (briefly and informally) describe the main points of the theory, and we refer for instance to the survey article~\cite{heriott} for more details.

The approach in~\cite{DPL,Amb} is based on the notion of renormalized solution of~\eqref{e:transport}. Formally at least, a strategy to prove uniqueness for~\eqref{e:transport} consists in deriving energy estimates: multiplying~\eqref{e:transport} by~$2u$, integrating in space, and integrating by parts, one obtains
\begin{equation}\label{e:energy}
\frac{d}{dt} \int_{\R^N} u(t,x)^2 \, dx \leq \| \dive b\|_\infty \int_{\R^N} u(t,x)^2 \, dx \,.
\end{equation} 
If the divergence of the vector field is bounded, Gr\"onwall lemma together with the linearity of~\eqref{e:transport} implies uniqueness. However, the formal computations leading to~\eqref{e:energy} cannot be made rigorous without any regularity assumptions: when dealing with weak solutions of~\eqref{e:transport}, which do not enjoy any regularity beyond integrability, it is not justified to apply the chain rule in order to get the identities
$$
2u\partial_t u = \partial_t u^2
\qquad \text{ and } \qquad 
2u\nabla u = \nabla u^2 \,.
$$
Following~\cite{DPL}, we say that a bounded weak solution $u$ of~\eqref{e:transport} is a renormalized solution if
\begin{equation}\label{e:renorm}
\partial_t \beta(u) + b \cdot \nabla \beta(u) = 0
\end{equation}
holds in the sense of distributions for every smooth function $\beta : \R\to\R$. Roughly speaking, renormalized solutions are the class inside which the energy estimate~\eqref{e:energy} can be made rigorous. The problem is then switched to proving that all weak solutions are renormalized. To achieve this, one can regularize~\eqref{e:transport} by convolving with a regularization kernel $\rho_\eps(x)$, obtaining 
$$
\partial_t u_\eps + b \cdot \nabla u_\eps = b \cdot \nabla u_\eps - (b \cdot \nabla u)\ast\rho_\eps =: R_\eps\,,
$$
where we denote $u_\eps = u \ast \rho_\eps$ and the right hand side $R_\eps$ is called commutator. Multiplying this equation by $\beta'(u_\eps)$ we obtain
\begin{equation}\label{e:aftmult}
\partial_t \beta(u_\eps) + b \cdot \nabla \beta(u_\eps) = R_\eps \, \beta'(u_\eps)\,,
\end{equation} 
which implies~\eqref{e:renorm} provided the commutator $R_\eps$ converges to zero strongly. Such a convergence holds under Sobolev regularity assumptions on the vector field $b$, as can be proved by rewriting the commutator as an integral involving difference quotients of the vector field. This strategy has been pursued in~\cite{DPL} to show uniqueness and stability of weak solutions of~\eqref{e:transport} in the case of Sobolev vector fields, and extended (with several nontrivial modifications) by Ambrosio~\cite{Amb} to the case of vector fields with bounded variation. The convergence to zero of the right hand side of~\eqref{e:aftmult} is more complex in this last setting, and the convolution kernel $\rho_\eps$ has to be properly chosen in a way which depends on the vector field itself. 

An alternative approach has been developed in~\cite{CDL}, working at the level of the ODE~\eqref{e:ode} and deriving a priori estimates for a functional measuring a ``logarithmic distance'' between two flows associated to the same vector field, namely
\begin{equation}\label{e:basfunc}
\Phi_\delta(s) = \int \log \left( 1 + \frac{|X(s,x)-\bar X(s,x)|}{\delta}\right) \, dx \,,
\end{equation}
where $\delta>0$ is a small parameter which is optimized in the course of the argument. Differentiating the functional $\Phi_\delta$ in time one can estimate
$$
\Phi'_\delta(s) \lesssim \int \frac{|b(s,X(s,x))-b(s,\bar X(s,x))|}{|X(s,x)-\bar X(s,x)|} \, dx
\lesssim \int \big[ MDb(s,X(s,x)) + MDb(s,\bar X(s,x))\big] \, dx \,,
$$
where in the second inequality we have estimated the difference quotients of $b$ with the maximal function of $Db$ (see Definition~\ref{d:maxfct} and Lemma~\ref{firstlemma}). Changing variable along the flows $X$ and $\bar X$ (which are assumed to have controlled compressibility), and recalling that the maximal function satisfies the so-called strong inequality $\|Mf\|_{L^p}\lesssim\|f\|_{L^p}$ when $1<p\leq\infty$ (see Lemma~\ref{l:maxest}), we find that $\Phi_\delta$ is uniformly bounded in $s$ and in $\delta$ if $b\in W^{1,p}$ with $1<p\leq\infty$. Together with the estimate 
$$
\mathcal{L}^{N}\big( \big\{ |X(s,x)-\bar X(s,x)|>\gamma \big\}\big) \leq \frac{\Phi_\delta(s)}{\log\left(1+ \frac{\gamma}{\delta}\right)} \qquad\qquad \forall \gamma>0\,,
$$
letting $\delta\to 0$ implies that $X=\bar X$ almost everywhere.

This and related estimates have been used in~\cite{CDL} and in several subsequent papers in order to prove uniqueness, compactness, stability, and mild regularity for the flow. The main advantage of this approach lies in its quantitative character. Let us mention that the same approach can also be used in some regularity settings not covered by the approach of~\cite{DPL,Amb}, as for instance in~\cite{BC,BBC,Hung}. 

We would like to remark that both approaches (renormalization and estimates for the ODE) require information on a full derivative of the vector field, even though in a suitable weak sense (Sobolev or $BV$ regularity, derivative which is a singular integral of an integrable function\ldots), with an integrable control with respect to time. This kind of assumption is in general sharp for the well-posedness, as shown by various counterexamples (\cite{Dep,CLR,Aiz,DPL,ABC1,ABC2}). However, under more special ``structural'' conditions on the vector field, well-posedness can be proved even for vector fields with ``less than one derivative'', see for instance~\cite{ABC1,ABC2} in the two-dimensional setting and~\cite{JC} for the Hamiltonian case in general dimension. 

A further case enjoying a ``special structure'' is that of partially regular vector fields as in~\cite{LBL,Ler1,Ler2}. Let us describe this case in some more detail. We assume to have a splitting of the space as $\R^N = \R^{n_1}\times\R^{n_2}$ and we denote the variable by~$x=(x_1,x_2)$. We consider a vector field of the form
\begin{equation}\label{e:partialfield}
b = (b_1,b_2)\,, \qquad \text{ with} \qquad 
b_1 = b_1(t,x_1)\,, \qquad b_2=b_2(t,x_1,x_2)\,,
\end{equation} 
where $b_1$ is assumed to be Sobolev (respectively, $BV$) in $x_1$, and $b_2$ is assumed to be Sobolev (respectively, $BV$) in $x_2$, but merely integrable in $x_1$, see~\cite{LBL,Ler1} (respectively,~\cite{Ler2}). Compared to the theory in~\cite{DPL,Amb}, no regularity is required for $b_2$ in the variable $x_1$; this is due to the strong requirement that $b_1$ does not depend on $x_2$. The authors in~\cite{LBL,Ler1,Ler2} address the PDE problem relying on the renormalization theory, with the additional idea to use two regularization kernels, namely $\rho_{\eps_1} = \rho_{\eps_1}(x_1)$ and $\rho_{\eps_2} = \rho_{\eps_2}(x_2)$, and to eventually send $\eps_1\to 0$ first, and then $\eps_2 \to 0$. Roughly speaking, this gives rise to commutators ``in $x_1$ only'' for $b_1$ and ``in $x_2$ only'' for $b_2$.

In this paper we exploit the Lagrangian approach from~\cite{DPL} in order to derive well-posedness and quantitative estimate for the flow associated to a vector field of the form~\eqref{e:partialfield}. As in~\cite{LBL,Ler1,Ler2} we exploit the anisotropy of the problem and we employ different scales in $x_1$ and $x_2$. However, this is not done by convolving the PDE with the two kernels $\rho_{\eps_1}(x_1)$ and $\rho_{\eps_2}(x_2)$, but rather relying on an anisotropic variant (introduced in~\cite{BBC}) of the Lagrangian functional~\eqref{e:basfunc}, namely
\begin{equation}\label{e:anfunc}
\Phi_{\delta_1,\delta_2}(s) = \int \log\left( 1 + \frac{|X_1-\bar X_1|}{\delta_1} + 
\frac{|X_2-\bar X_2|}{\delta_2}\right)\, dx\,,
\end{equation}
where $\delta_1 \leq \delta_2$ (see~\eqref{e:functional} below for the exact expression of the functional we will use). 

In fact, due to the structure of the proof, we cannot send the two parameters $\delta_1$ and $\delta_2$ to zero one after the other; they are however related, and $\delta_1$ will be taken to be much smaller than $\delta_2$. This will reflect in the need for some regularity on $b_2$ in the variable $x_1$; however, we will need only a derivative of fractional order (more specifically, higher that $1/2$, see assumption~{\bf (R2)} in 
Section~\ref{ss:assumpt} for the precise statement).

Let us explain the key steps in our argument. Directly differentiating~$\Phi_{\delta_1,\delta_2}$ in time and arguing as in~\cite{BBC} we get
$$
\Phi_{\delta_1,\delta_2}(s) \lesssim \| D_{x_1}b_1\| + \frac{\delta_1}{\delta_2} \| D_{x_1}b_2\| + \| D_{x_2}b_2\|\,,
$$
with suitable norms on the right hand side, which depend on which exact regularity we assume on the vector field. The ratio $\delta_1/\delta_2$ can indeed be taken very small, but since $b_2$ does not possess a full derivative with respect to $x_1$, the term $\| D_{x_1}b_2\|$ is not bounded.

We can fix this issue by regularizing $b_2$ in the variable $x_1$ at scale $\eps>0$. In this way we get:
\begin{equation}\label{e:finheur}
\Phi_{\delta_1,\delta_2}(s) \lesssim \frac{\| b_2^\eps-b_2\|}{\delta_1} + \| D_{x_1}b_1\| + \frac{\delta_1}{\delta_2} \| D_{x_1}b_2^\eps \| + \| D_{x_2}b_2\| \lesssim C + \frac{\eps^\alpha}{\delta_1}+\frac{\delta_1}{\delta_2}\eps^{\alpha-1}\,,
\end{equation} 
where in the second inequality we used that
$$
\| b_2^\eps-b_2\| \sim \eps^\alpha
\qquad\text{ and }\qquad
\| D_{x_1}b_2^\eps \| \sim \eps^{\alpha-1}\,,
$$
assuming that $b_2$ possesses a derivative of order $\alpha$ in $x_1$ (see Lemma~\ref{lemma_W^{s,p}}). Taking $\delta_1 = \delta_2 \eps^{1-\alpha}$ the right hand side of~\eqref{e:finheur} takes the form $C + \eps^{2\alpha-1}/\delta_2$, which can be made bounded as $\delta_2\to 0$ by a suitable choice of $\eps$ provided $\alpha>1/2$. This is the reason why, with this approach, we need some fractional regularity of $b_2$ in $x_1$. From this bound on $\Phi_{\delta_1,\delta_2}$ all results on the well-posedness and further properties of the flow follow as in~\cite{BC}, see Section~\ref{ss:coroll} for the precise statements. 

\subsection*{Acknowledgements} This work has been partially supported by the Swiss National Science Foundation grant~200020\_156112 and by the ERC Starting Grant 676675 FLIRT. The authors gratefully acknowledge useful discussions with A.~Bohun on a preliminary version of these results.

%%%%%%%%%%%%%%%%%%%%%%%%%% 

\section{Preliminaries}

\subsection{Regular Lagrangian flows}

In the context of non-Lipschitz vector fields, the right concept of solution of the ordinary differential equation~\eqref{e:ode} is that of regular Lagrangian flow (see~\cite{DPL,Amb,heriott}). In the following, we are going to assume that
the vector field $b : (0,T) \times \R^N \to \R^N$ satisfies the following growth condition:
\begin{equation}
\label{growth_on_b}
\begin{split}
\mathbf{(R1)}: \quad &\frac{b(s,x)}{1+|x|} = c_1(s,x) + c_2(s,x)\,, \\
&\text{with } \quad c_1 \in L^1((0,T);L^1(\R^N)) \quad\text{ and }\quad c_2 \in L^1((0,T);L^\infty(\R^N)) \,.
\end{split}
\end{equation}

\begin{definition}[Regular Lagrangian flow]\label{d:rlf}
Let $b$ be a vector field satisfying {\bf (R1)}. A map 
$$
X\in C([0,T];L^0_{\loc}(\R^N))\cap \mathcal{B}([0,T];\log L_{\loc}(\R^N))
$$ 
is a regular Lagrangian flow in the renormalized sense relative to $b$ if:
\begin{enumerate}
\item The equation 
$$\p_s\bigl(\beta(X(s,x))\bigl)=\beta ' (X(s,x))b(s,X(s,x))$$ holds in $\ID'((0,T)\times\irn)$, for every function $\beta\in C^1(\irn;\IR)$ that satisfies
$$
|\beta(z)|\leq C(1+\log(1+|z|)) \quad \text{ and }\quad 
|\beta'(z)|\leq\frac{C}{1+|z|} \quad \text{  for all $z\in\R^N$,}
$$
\item $X(0,x)=x$ for a.e $x\in\irn$, 
\item There exists a constant $L\geq 0$ such that $\int_{\irn} \varp(X(s,x))dx\leq L\int_{\irn} \varphi(x)dx$ for all continuous functions  $\varphi:\R^N\rightarrow[0,\infty)$.  The constant $L$ is called {\em compressibility constant} of the flow.
\end{enumerate}
\end{definition}
In the above definition, $L^0_{\loc}$ denotes the space of measurable functions endowed with the local convergence in measure, $\mathcal{B}$ denotes the space of bounded functions, and $\log L_{\loc}$ denotes the space of locally logarithmically integrable functions. 

Given a vector field satisfying {\bf (R1)}, we can estimate the measure of the superlevels of the associated regular Lagrangian flow thanks to the following lemma:
\begin{lemma}\label{estsuper}
Let $b:(0,T)\times\R^N \to \R^N$ be a vector field satisfying  {\bf (R1)} and let $X:[0,T]\times\R^N\rightarrow\R^N$ be a regular Lagrangian flow relative to $b$ with compressibility constant $L$. Define the sublevels of the flow as
\begin{equation}\label{e:sublevels}
G_\lambda=\{x\in\R^N: |X(s,x)|\leq\la \text{ for almost all } s\in[0,T]\} \,.
\end{equation}
Then for all $r,\lambda>0$ it holds
$$
\La^N(B_r\setminus G_\lambda)\leq g(r,\lambda)\,,
$$
where the function $g$ depends only on $L$, $\|c_1\|_{L^1((0,T);L^1(\irn))}$ and $\|c_2\|_{L^1((0,T);L^\infty(\irn))}$ and satisfies $g(r,\lambda)\downarrow 0$ for $r$ fixed and $\lambda\uparrow \infty$. 
\end{lemma}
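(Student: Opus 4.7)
The plan is to apply the renormalization property from Definition~\ref{d:rlf}(1) with the choice $\beta(z)=\log(1+|z|)$, which is admissible since $|\beta(z)|\leq 1+\log(1+|z|)$ and $|\beta'(z)|\leq 1/(1+|z|)$, and then close the argument by a Chebyshev inequality applied on $B_r$. Combining the renormalization identity with the growth condition \textbf{(R1)} yields, for almost every $x\in\R^N$ and almost every $s\in(0,T)$, the differential bound
\[
\bigl|\partial_s \log(1+|X(s,x)|)\bigr| \leq \frac{|b(s,X(s,x))|}{1+|X(s,x)|}\leq |c_1(s,X(s,x))|+|c_2(s,X(s,x))|.
\]
Integrating in $s$ from $0$ and using $X(0,x)=x$ (item 2 of Definition~\ref{d:rlf}) gives the $s$-uniform estimate
\[
\esssup_{s\in[0,T]}\log(1+|X(s,x)|) \leq \log(1+|x|) + \int_0^T\bigl[|c_1(\tau,X(\tau,x))|+|c_2(\tau,X(\tau,x))|\bigr]\,d\tau.
\]

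Next, I would integrate this inequality over $B_r$. The contribution of $\log(1+|x|)$ is clearly controlled by $|B_r|\log(1+r)$. For the $c_1$-term, Fubini reduces matters to estimating $\int_{\R^N}|c_1(\tau,X(\tau,x))|\,dx$ for each fixed $\tau$, and item 3 of Definition~\ref{d:rlf} (compressibility) bounds this by $L\,\|c_1(\tau,\cdot)\|_{L^1(\R^N)}$; integrating in $\tau$ produces $L\,\|c_1\|_{L^1((0,T);L^1(\R^N))}$. For the $c_2$-term, one bounds $|c_2(\tau,X(\tau,x))|$ pointwise by $\|c_2(\tau,\cdot)\|_{L^\infty(\R^N)}$ and integrates to get $|B_r|\,\|c_2\|_{L^1((0,T);L^\infty(\R^N))}$. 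Altogether we reach a bound
\[
\int_{B_r}\esssup_{s\in[0,T]}\log(1+|X(s,x)|)\,dx \leq C\bigl(r,L,\|c_1\|_{L^1((0,T);L^1)},\|c_2\|_{L^1((0,T);L^\infty)}\bigr).
\]

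To conclude, notice that $x\in B_r\setminus G_\lambda$ forces $\esssup_{s\in[0,T]}\log(1+|X(s,x)|)>\log(1+\lambda)$ by the definition~\eqref{e:sublevels} of $G_\lambda$; Chebyshev's inequality therefore delivers
\[
\La^N(B_r\setminus G_\lambda) \leq \frac{C\bigl(r,L,\|c_1\|,\|c_2\|\bigr)}{\log(1+\lambda)}=:g(r,\lambda),
\]
which visibly satisfies $g(r,\lambda)\downarrow 0$ as $\lambda\uparrow\infty$ for $r$ fixed. The only mildly technical point is upgrading the distributional identity in Definition~\ref{d:rlf}(1) into the pointwise-in-$s$ integrated inequality used above: this relies on checking that, for almost every $x$, the map $\tau\mapsto \beta'(X(\tau,x))\,b(\tau,X(\tau,x))$ is integrable on $[0,T]$, which itself is a consequence of \textbf{(R1)} combined with compressibility and Fubini, and implies that $s\mapsto\log(1+|X(s,x)|)$ is absolutely continuous with the expected derivative. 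Apart from this standard verification, the whole argument is a one-step Gr\"onwall-and-Chebyshev computation.
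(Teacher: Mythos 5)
The paper states Lemma~\ref{estsuper} without proof; it is a standard result quoted from the Lagrangian-flow literature (see~\cite{CDL,BC}). Your argument reproduces exactly that standard approach: renormalize with a logarithmic $\beta$, bound $\partial_s\beta(X)$ via \textbf{(R1)}, integrate over $B_r$ exploiting the compressibility constant for the $c_1$-term and the $L^\infty$-in-space bound for the $c_2$-term, and close with Chebyshev. The structure, the Fubini/compressibility step, and the final constant $C\bigl(r,L,\|c_1\|_{L^1_tL^1_x},\|c_2\|_{L^1_tL^\infty_x}\bigr)$ are all correct, and you rightly flag the only genuinely technical point, namely upgrading the distributional renormalization identity to an absolutely-continuous-in-time statement for a.e.\ $x$, which your integrability check justifies.

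One concrete small gap: the function $\beta(z)=\log(1+|z|)$ is \emph{not} $C^1$ at $z=0$ (its gradient $z/\bigl(|z|(1+|z|)\bigr)$ has no limit at the origin), so it is not an admissible test function in Definition~\ref{d:rlf}(1) as literally stated. The standard remedy is to take instead a smooth variant such as
\[
\beta(z)=\log\sqrt{1+|z|^2}\,,\qquad |\beta(z)|\leq\log(1+|z|)\leq 1+\log(1+|z|)\,,\quad |\nabla\beta(z)|=\frac{|z|}{1+|z|^2}\leq\frac{2}{1+|z|}\,,
\]
which is admissible. With this substitution every estimate you wrote goes through unchanged (one still has $\beta(z)\geq \tfrac12\log(1+|z|)$ for large $|z|$, so the Chebyshev step yields the same decay in $\lambda$ up to a harmless factor).
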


\subsection{Fractional Sobolev spaces}

We will make use of fractional Sobolev spaces according to the Sobolev--Slobodeckij definition:
\begin{definition}[Fractional Sobolev--Slobodeckij space] Let $f : \R^n \to \R$ be an integrable function, $f \in L^1(\R^n)$. Given $0<s<1$ and $1\leq p <\infty$, we say that $f \in W^{s,p}(\R^n)$ if 
$$
\int_{\R^n} \int_{\R^n} \frac{ |f(x)-f(y)|^{p}}{|x-y|^{sp+n}} \, dy \, dx  < \infty \,.
$$
\end{definition} 

The following lemma gives a rate of convergence of the convolution to the original function, and a rate of blow-up of the derivative of the function, under the assumption of fractional Sobolev regularity. 

\begin{lemma}
\label{lemma_W^{s,p}}
Let $f \in W^{s,p}(\R^n)$ and let $f^{\eps}$ be the convolution of $f$ with the standard mollifier $\varphi^{\eps}$. Then we have
\begin{equation}
\|f-f^{\eps}\|_{L^{p}(\R^{n})}\leq C  \eps^{s}\|f\|_{W^{s,p}(\R^n)} \qquad \text{ and } \qquad 
\|Df^{\eps}\|_{L^{p}(\R^{n})} \leq C  \eps^{s-1}\|f\|_{W^{s,p}(\R^n)}.
\end{equation}

\end{lemma}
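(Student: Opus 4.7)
\textbf{Proof proposal for Lemma \ref{lemma_W^{s,p}}.}

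The plan is to reduce both estimates to integrals of the translation increments $\Delta_y f(x) := f(x-y) - f(x)$, whose $L^p$-norms are controlled in an averaged sense by the Sobolev--Slobodeckij seminorm
$$
[f]_{W^{s,p}}^p = \int_{\R^n} \frac{\|\Delta_y f\|_{L^p(\R^n)}^p}{|y|^{sp+n}}\, dy.
$$
Throughout, I will use that a standard mollifier satisfies $\mathrm{supp}(\varphi^\eps) \subset B_\eps$, $\|\varphi^\eps\|_{L^\infty} \leq C\eps^{-n}$, $\|D\varphi^\eps\|_{L^\infty} \leq C\eps^{-n-1}$, $\int \varphi^\eps = 1$, and $\int D\varphi^\eps = 0$.

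For the first estimate, since $\int \varphi^\eps = 1$ I would write
$$
f^\eps(x) - f(x) = \int_{\R^n} \varphi^\eps(y) \Delta_y f(x)\, dy,
$$
and apply Jensen's inequality with the probability measure $\varphi^\eps(y)\,dy$ followed by Fubini, to obtain
$$
\|f^\eps - f\|_{L^p}^p \leq \int_{\R^n} \varphi^\eps(y) \|\Delta_y f\|_{L^p}^p \, dy \leq C\eps^{-n}\int_{|y|\leq \eps} \|\Delta_y f\|_{L^p}^p \, dy.
$$
Inserting the weight $|y|^{sp+n}$ and using $|y|\leq \eps$ on the support, the right-hand side is bounded by $C\eps^{-n} \eps^{sp+n} [f]_{W^{s,p}}^p = C\eps^{sp}[f]_{W^{s,p}}^p$, and taking $p$-th roots yields the first claim.

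For the second estimate, the cancellation $\int D\varphi^\eps = 0$ lets me write
$$
Df^\eps(x) = \int_{\R^n} D\varphi^\eps(y)\, \Delta_y f(x)\, dy,
$$
and then Minkowski's integral inequality gives
$$
\|Df^\eps\|_{L^p} \leq \int_{\R^n} |D\varphi^\eps(y)| \|\Delta_y f\|_{L^p}\, dy \leq C\eps^{-n-1} \int_{|y|\leq\eps} \|\Delta_y f\|_{L^p}\, dy.
$$
A single application of H\"older's inequality in $y$ with exponents $p$ and $p'$, inserting the weight $|y|^{s+n/p}$ to form the Slobodeckij integrand and controlling the complementary weight on $B_\eps$, bounds the last integral by $C\eps^{n/p'} \eps^{s+n/p}[f]_{W^{s,p}} = C\eps^{n+s}[f]_{W^{s,p}}$. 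Combined with the prefactor $\eps^{-n-1}$ this yields $\|Df^\eps\|_{L^p} \leq C\eps^{s-1}[f]_{W^{s,p}}$.

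Neither step is really an obstacle: the only thing to be careful about is matching the powers of $\eps$ when extracting the Slobodeckij seminorm via H\"older with the weights $|y|^{sp+n}$ and $|y|^{s+n/p}$. The use of the cancellation $\int D\varphi^\eps = 0$ is crucial for the second bound, since otherwise one would only get the trivial $\|Df^\eps\|_{L^p} \leq C\eps^{-1}\|f\|_{L^p}$, which does not exploit the fractional regularity of $f$.
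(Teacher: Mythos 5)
Your proof is correct, and the overall strategy coincides with the paper's: both arguments represent $f^\eps - f$ and $Df^\eps$ as integrals of the translation increments $f(x-y)-f(x)$ against $\varphi^\eps$ or $D\varphi^\eps$, both exploit the cancellation $\int D\varphi^\eps = 0$ for the gradient bound, and both insert the weight $|y|^{sp+n}$ to reconstitute the Sobolev--Slobodeckij integrand. The only difference is technical and confined to the second estimate: you pass to $L^p_x$ norms first via Minkowski's integral inequality and then apply H\"older in $y$ over $B_\eps$ to extract the factor $\eps^{s+n}$, whereas the paper works pointwise, applying Jensen's inequality with the normalized Lebesgue measure $dz/\mathcal{L}^n(B_1)$ on $B_1$ and then pulling out $\sup_z |z|^{sp+n}|D\varphi(z)|^p$ before changing variables. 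Both routes are elementary and produce the same power of $\eps$; yours is marginally tidier in that it avoids the explicit normalization by $\mathcal{L}^n(B_1)$.
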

\begin{proof}
For the first estimate we compute 
\begin{align*}
&\|f-f^{\eps}\|^p_{L^{p}(\R^{n})} = \int_{\R^n}|f(x)-f^{\eps}(x)|^{p} dx = \int_{\R^n}\left|f(x)-\int_{\R^n} \!\!f(x-y)\varphi_\eps (y)\, dy \right|^{p} dx \\
& = \int_{\R^n}\left|f(x)-\int_{\R^n}\! \!f(x-y)\varphi\left(\frac{y}{\eps}\right)\frac{1}{\eps^{n}}\, dy \right|^{p} dx= \int_{\R^n}\left|f(x)-\int_{\R^n} \!\!f(x-\eps z)\varphi(z)\frac{1}{\eps^{n}}\,\eps^{n} dz \right|^{p} dx \\
& =\int_{\R^n}\left|f(x)-\int_{\R^n} \!\!f(x-\eps z)\varphi(z)\, dz \right|^{p} dx = \int_{\R^n} \left|\int_{\R^n} [f(x)\varphi(z)-f(x-\eps z)\varphi(z)] dz \right|^{p}  dx\\
& =\int_{\R^n} \left| \int[f(x)-f(x-\eps z)]\varphi(z) dz \right|^{p} dx \leq \int_{\R^n}\int_{\R^n} |f(x)-f(x-\eps z)|^{p}\varphi(z)\, dz\, dx   \\
& \leq \int_{\R^n} \int_{\R^n} \frac{|f(x)-f(x-\eps z)|^{p}}{|\eps z|^{sp+n}}|\eps z|^{sp+n}\varphi(z)\, dz\, dx \\
& \leq \eps^{sp+n}\int_{\R^n} \int_{\R^n} \frac{|f(x)-f(x-\eps z)|^{p}}{|\eps z|^{sp+n}}\sup_{z}\{|z|^{sp+n}\varphi(z)\} dz\, dx \\
& \leq C \eps^{sp+n}\int_{\R^n} \int_{\R^n} \frac{|f(x)-f(x-\eps z)|^{p}}{|\eps z|^{sp+n}} dz\, dx = C \eps^{sp+n}\int_{\R^n} \int_{\R^n} \frac{|f(x)-f(y)|^{p}}{|x-y|^{sp+n}}\frac{1}{\eps^{n}}dy\, dx \\
& \leq C \eps^{sp}\|f\|_{W^{s,p}}^{p} \,,
\end{align*}
where in the forth line we used Jensen's inequality applied with the measure $\varphi \cdot \mathcal{L}^{n}$. This proves the first inequality in the statement. 

For the second estimate we compute
\begin{align*}
& \|Df^{\eps}\|^p_{L^{p}(\R^{n})}= \| f \ast D\varphi^{\eps}\|^p_{L^{p}(\R^{n})}=\int_{\R^n}\left|\int_{\R^n} \!\!f(x-y) D\varphi^{\eps}(y)\,dy\right|^{p}dx \\
& = \int_{\R^n}\left|\int_{\R^n}\!\!f(x-y) D_y\left(\frac{1}{\eps^{n}}\varphi\left(\frac{y}{\eps}\right)\right)\,dy\right|^{p} dx = \int_{\R^n}\left|\int_{\R^n}\!\!f(x-\eps z) D_z\varphi(z)\frac{1}{\eps^{n+1}} \eps^{n}\,dz \right|^{p} dx \\
&= \frac{1}{\eps^{p}}\int_{\R^n}\left|\int_{B_1}\!\!f(x-\eps z) D_z\varphi(z)\,dz\right|^{p} dx = \frac{1}{\eps^{p}}\int_{\R^n}\left|\int_{B_1}\!\!f(x-\eps z) D_z\varphi(z)\,dz -\int_{\R^n}\!\! f(x)D_z\varphi(z)\, dz \right|^{p} dx \\
& = \frac{1}{\eps^{p}}\mathcal{L}^{n}(B_1)^{p}\int_{\R^n}\left|\int_{B_1}\!\![f(x-\eps z)- f(x)]D_z\varphi(z)\, \frac{dz}{\mathcal{L}^{n}(B_1)} \right|^{p} dx              \\
& \leq \frac{1}{\eps^{p}}\mathcal{L}^{n}(B_1)^{p}\int_{\R^n}\int_{B_1} \left| [f(x-\eps z)-f(x)] D_z\varphi(z)\right|^{p}\frac{dz}{\mathcal{L}^{n}(B_1)} dx \\
& = \frac{1}{\eps^{p}}\mathcal{L}^{n}(B_1)^{p-1}\int_{\R^n}\int_{B_1} \left| [f(x-\eps z)-f(x)] D_z\varphi(z)\right|^{p}dz\, dx \\
& = \frac{1}{\eps^{p}}\mathcal{L}^{n}(B_1)^{p-1}\int_{\R^n}\int_{B_1} \frac{ |f(x-\eps z)-f(x)|^{p}}{|\eps z|^{sp+n}}|\eps z|^{sp+n}|D_z\varphi(z)|^{p}dz\, dx \\
& \leq \frac{1}{\eps^{p}}\eps^{sp+n}C_{n}\int_{\R^n}\int_{B_1} \frac{ |f(x-\eps z)-f(x)|^{p}}{|\eps z|^{sp+n}}\sup_z\{| z|^{sp+n}|D_z\varphi(z)|^{p}\}dz\, dx \\
& \leq C \eps^{p(s-1)}\eps^{n}\int_{\R^n}\int_{\R^n} \frac{ |f(x)-f(y)|^{p}}{|x-y|^{sp+n}}\frac{1}{\eps^{n}} dy\, dx \\
& \leq C \eps^{p(s-1)}\|f\|_{W^{s,p}}^{p} \,,
\end{align*}
where in the third line we used that $D_z \varphi$ has zero average, and in the fifth line we used Jensen's inequality for the measure$\frac{1}{\mathcal{L}^{n}(B_1)}\cdot\mathcal{L}^{n}$.
\end{proof}

\subsection{Maximal estimates}

In the course of the proof of our main theorem we will several times need to estimate difference quotients 
of the vector field. We will follow the strategy in~\cite{CDL} and rely on suitable maximal estimates. We now
briefly recall the main definitions, the most classical version of these estimates, and some anisotropic variants 
proved in~\cite{BBC}.

\begin{definition}\label{d:maxfct}
For any integrable function $u : \R^n \to \R$ the maximal function of $u$ is defined as
$$
Mu(x)= \sup_{r>0}\frac{1}{\mathcal{L}^{n}(B(x,r))}\int_{B(x,r)}|u(z)|\, dz \,,
\qquad x \in \R^n\,.
$$
\end{definition}

It can be shown that, for $u\in L^1(\R^n)$, the maximal function $Mu$ is a.e.~finite. Moreover, the following norm estimates hold (see~\cite{St1,St2} for a proof):

\begin{lemma}\label{l:maxest}
For any $1<p\leq \infty$ the strong estimate
$$
\| Mu\|_{L^p(\R^n)} \leq C \| u\|_{L^p(\R^n)}
$$
holds, where $C$ depends on $p$ and $n$ only. For $p=1$ only the weak etimate
$$
||| Mu|||_{M^1(\R^n)} \leq C \| u\|_{L^1(\R^n)}
$$
holds, with $C$ depending on $n$ only. In the above we denoted by
\begin{equation}\label{e:wL1}
||| f |||_{M^1(\R^n)} = \sup_{\lambda>0} \big\{ \lambda \, \mathcal{L}^n (\{ x : |f|>\lambda\}) \big\}
\end{equation}
the weak-$L^1$ norm.
\end{lemma}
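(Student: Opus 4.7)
The plan is to establish the three estimates separately, in increasing order of difficulty: the $L^\infty$ bound is immediate, the weak-$L^1$ bound at $p=1$ follows from a Vitali covering argument, and the strong $L^p$ bound for $1<p<\infty$ is obtained by a Marcinkiewicz-style interpolation between these two endpoints.

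For $p=\infty$ there is essentially nothing to do: every average $\mathcal{L}^n(B(x,r))^{-1}\int_{B(x,r)}|u|\,dz$ is bounded by $\|u\|_{L^\infty}$, so taking the supremum over $r>0$ gives $\|Mu\|_{L^\infty}\leq \|u\|_{L^\infty}$.

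For the weak-$L^1$ inequality, fix $\lambda>0$ and consider the superlevel set $E_\lambda=\{x\in\R^n:Mu(x)>\lambda\}$. By definition of the maximal function, for each $x\in E_\lambda$ there exists a radius $r_x>0$ such that
$$
\mathcal{L}^n(B(x,r_x))<\frac{1}{\lambda}\int_{B(x,r_x)}|u(z)|\,dz\,.
$$
In particular the radii $r_x$ are uniformly bounded by $(\|u\|_{L^1}/(\lambda\omega_n))^{1/n}$, where $\omega_n$ is the volume of the unit ball, which allows one to apply the standard $5r$-covering lemma of Vitali: we extract from $\{B(x,r_x)\}_{x\in E_\lambda}$ a countable pairwise disjoint subfamily $\{B_i\}=\{B(x_i,r_i)\}$ such that $\bigcup_i B(x_i,5r_i)\supset E_\lambda$. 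Combining these facts with disjointness,
$$
\mathcal{L}^n(E_\lambda)\leq 5^n\sum_i \mathcal{L}^n(B_i)\leq \frac{5^n}{\lambda}\sum_i \int_{B_i}|u|\,dz\leq \frac{5^n}{\lambda}\|u\|_{L^1(\R^n)}\,,
$$
which is the required bound with constant $C=5^n$.

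For the strong $L^p$ estimate with $1<p<\infty$, I would use the standard truncation trick: writing $u=u_1+u_2$ with $u_1=u\mathbf{1}_{\{|u|>\lambda/2\}}$ and $u_2=u\mathbf{1}_{\{|u|\leq\lambda/2\}}$, sublinearity of $M$ and the trivial $L^\infty$ bound give $Mu_2\leq \lambda/2$ pointwise, hence $\{Mu>\lambda\}\subset\{Mu_1>\lambda/2\}$. Applying the weak-$L^1$ inequality to $u_1$ and then the layer-cake representation yields
$$
\|Mu\|_{L^p}^p=p\int_0^\infty \lambda^{p-1}\mathcal{L}^n(\{Mu>\lambda\})\,d\lambda\leq 2\cdot 5^n\, p\int_0^\infty \lambda^{p-2}\int_{\{|u|>\lambda/2\}}|u(z)|\,dz\,d\lambda\,.
$$
Swapping the two integrals via Fubini and computing the inner integral in $\lambda$ over $(0,2|u(z)|)$ (which is finite precisely because $p>1$) gives $\|Mu\|_{L^p}^p\leq C_{n,p}\|u\|_{L^p}^p$, with a constant blowing up like $1/(p-1)$ as $p\to 1^+$ — reflecting the known failure of the strong $L^1$ bound. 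The main technical step is the Vitali extraction: some care is needed to guarantee bounded radii so that the $5r$-covering lemma applies, but the a priori $L^1$ bound on $\mathcal{L}^n(B(x,r_x))$ provides this automatically.
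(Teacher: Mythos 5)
Your proof is correct and is the classical Hardy--Littlewood argument: the trivial $L^\infty$ bound, the weak-$(1,1)$ estimate via the Vitali $5r$-covering lemma, and real (Marcinkiewicz-type) interpolation with the truncation $u=u\mathbf{1}_{\{|u|>\lambda/2\}}+u\mathbf{1}_{\{|u|\le\lambda/2\}}$. The paper does not reprove this lemma; it simply cites Stein's books, which give exactly this proof, so you have matched the intended reference argument.
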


The basic maximal estimate for the difference quotients of a Sobolev function is the following one. We recall its classical proof for the reader's convenience.

\begin{lemma}
\label{firstlemma}
Let $f:\R^{n}\rightarrow\R$ be a function in $W^{1,1}(\R^{n})$. Then for a.e. $x,y \in \R^{n}$,
$$
| f(x)-f(y)|\leq C_n |x-y|\big(  MDf(x)+MDf(y)\big) \,.
$$
\end{lemma}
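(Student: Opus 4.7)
The plan is to reduce the pointwise difference $|f(x)-f(y)|$ to averages of $f$ over balls of radius comparable to $|x-y|$, and then control these averages by the maximal function of $Df$ via the Poincaré inequality. The natural place to work is the set of Lebesgue points of $f$, which is a set of full measure; at any such point the value $f(x)$ coincides with $\lim_{r\to 0} \fint_{B(x,r)} f(z)\,dz$. Throughout, set $r:=|x-y|$ and take $B=B(x,2r)$, which contains both $x$ and $y$; the strategy will be to bound $|f(x)-f_B|$ and $|f(y)-f_B|$ separately and then apply the triangle inequality.

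First I would control $|f(x)-f_B|$ by a telescoping argument over the sequence of concentric balls $B_k:=B(x,2^{-k}\cdot 2r)$. For a Lebesgue point $x$, we have $f(x)=\lim_{k\to\infty} f_{B_k}$, so
$$|f(x)-f_B|\le \sum_{k=0}^{\infty}|f_{B_{k+1}}-f_{B_k}|.$$
Each term is bounded using the (1,1)-Poincaré inequality together with the obvious inclusion $B_{k+1}\subset B_k$ (which costs only a dimensional factor when replacing the normalized integral over $B_{k+1}$ by one over $B_k$):
$$|f_{B_{k+1}}-f_{B_k}|\le 2^{n}\fint_{B_k}|f(z)-f_{B_k}|\,dz\le C_n\,(2^{-k}\cdot 2r)\fint_{B_k}|Df(z)|\,dz\le C_n\,2^{-k}r\,MDf(x),$$
where the last inequality uses that $\fint_{B_k}|Df|\le MDf(x)$ by definition of the maximal function. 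Summing the geometric series yields $|f(x)-f_B|\le C_n r\,MDf(x)$.

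For the companion estimate involving $y$, I would first apply the same telescoping argument centered at $y$ to the ball $\widetilde B:=B(y,2r)$, obtaining $|f(y)-f_{\widetilde B}|\le C_n r\,MDf(y)$. Since $B\subset \widetilde B$ and $|\widetilde B|\le 2^{n}|B|$, one more step with Poincaré gives
$$|f_{\widetilde B}-f_B|\le \fint_B |f(z)-f_{\widetilde B}|\,dz\le 2^{n}\fint_{\widetilde B}|f(z)-f_{\widetilde B}|\,dz\le C_n r\,MDf(y).$$
Combining these with the triangle inequality gives $|f(y)-f_B|\le C_n r\,MDf(y)$, and summing with the bound on $|f(x)-f_B|$ produces the desired inequality with $r=|x-y|$.

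The only mildly delicate point is that the Poincaré inequality and the identification of $f(x)$ with the limit of averages are classical for smooth $f$, so to make the argument rigorous for $f\in W^{1,1}(\R^n)$ one approximates by smooth functions and uses the $L^1$-stability of both sides together with the a.e.\ convergence of averages at Lebesgue points. I do not expect a serious obstacle here: the main task is really the telescoping computation above, and choosing the centered ball $B=B(x,2r)$ so that the same Poincaré-plus-comparison mechanism works symmetrically for $x$ and $y$.
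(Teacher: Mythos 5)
Your proof takes a genuinely different route from the paper's. The paper writes $|f(x)-f(z)|$ as the integral of $Df$ along the segment from $x$ to $z$, averages over $z$ in a ball $A$ sitting between $x$ and $y$, changes variables to the shrinking balls $tx+(1-t)A$, and bounds the resulting averages by the maximal function; this is a Hedberg--Morrey type argument, proved first for $C^1$ and then extended by density. You instead telescope the average of $f$ over a dyadic chain of concentric balls shrinking to $x$ (and separately to $y$), use the $(1,1)$-Poincar\'e inequality on each ball, and sum the geometric series. Both are classical; your route stays entirely inside the language of averages and makes the role of the Lebesgue point hypothesis explicit, while the paper's route avoids invoking Poincar\'e at the price of a smoothness reduction step.

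There is, however, a concrete slip. With $r=|x-y|$, $B=B(x,2r)$ and $\widetilde B = B(y,2r)$, the inclusion $B\subset\widetilde B$ that you invoke is false: for $z\in B$ one only gets $|z-y|\le |z-x|+|x-y|<3r$, so $B\subset B(y,3r)$ but not $B(y,2r)$; moreover $|\widetilde B|=|B|$, so the stated volume comparison is also misplaced. As written, the step that enlarges the normalized integral of $|f-f_{\widetilde B}|$ from $B$ to $\widetilde B$ is not justified. The fix is immediate: take $\widetilde B=B(y,3r)$ (or shrink $B$ to $B(x,r)$, which gives $B\subset B(y,2r)$), after which the domain comparison holds, the ratio of volumes is a dimensional constant, and the rest of the argument goes through unchanged, affecting only the numerical constant $C_n$.
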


\begin{proof}
First we prove the estimate for $f\in C^{1}$. We denote
\begin{equation}
\begin{split}
A=B\left( \frac{x+y}{2}, \frac{|x-y|}{2}\right) \qquad & A_{t,x}=tx+(1-t)A \qquad  B_{t,x}=B(x,(1-t)|x-y|) \\
& A_{t,y}=ty+(1-t)A \qquad  B_{t,y}=B(y,(1-t)|x-y|) \,.
\end{split}
\end{equation}
We note that $A_{t,x}\subset B_{t,x}$ and $A_{t,y}\subset B_{t,y}$. We estimate
\begin{align*}
& |f(x)-f(y)|=\intbar_{A}|f(x)-f(y)|\, dz \leq  \intbar_{A}|f(x)-f(z)|\, dz + \intbar_{A}|f(y)-f(z)|\, dz   \\
& = \intbar_{A} \left| \int_0^{1}\frac{d}{dt}[f(tx+(1-t)z] dt    \right| dz + \intbar_{A} \left| \int_0^{1}\frac{d}{dt}[f(ty+(1-t)z] dt    \right| dz \\
& \leq \frac{1}{\mathcal{L}^{n}(A)}\int_{A}  \int_0^{1}\left|\frac{d}{dt}[f(tx+(1-t)z]\right| dt  \,  dz +\frac{1}{\mathcal{L}^{n}(A)} \int_{A} \int_0^{1} \left|\frac{d}{dt}[f(ty+(1-t)z] \right| dt  \, dz \\
& \leq \frac{1}{\mathcal{L}^{n}(A)} \left[ \int_0^{1} \! \!\int_{A} |Df(tx+(1-t)z) |\, |x-z| dz dt +  \int_0^{1}\!\! \int_{A}|Df(ty+(1-t)z) |\, |y-z|  dz dt\right] \\
& \leq \frac{1}{\mathcal{L}^{n}(A)}|x-y| \left[ \int_0^{1} \! \!\int_{A} |Df(tx+(1-t)z) | dz  dt +  \int_0^{1}\!\! \int_{A}|Df(ty+(1-t)z) | dz dt\right].
\end{align*}
We apply a change of variable and we obtain that the last line equals
\begin{align*}
& \frac{1}{\mathcal{L}^{n}(A)}|x-y| \left[ \int_0^{1} \! \!\int_{A_{t,x}} |Df(w) | \frac{dw}{1-t} dt +  \int_0^{1}\!\! \int_{A_{t,y}}|Df(w) | \frac{dw}{1-t} dt\right]  \\
& \leq  \frac{1}{\mathcal{L}^{n}(A)}|x-y| \left[ \int_0^{1} \frac{\mathcal{L}^{n} ( B_{t,x})}{1-t} \frac{1}{\mathcal{L}^{n}(B_{t,x})} \int_{B_{t,x}}|Df(w) | dw\, dt \;+\; \text{symmetric} \right]  \\
& \leq \frac{n}{\frac{|x-y|^{n}}{2^{n}} (2\pi )^{\frac{n}{2}}}  |x-y| \left[ \int_0^{1}    \frac{  \frac{(1-t)^{n}}{n}|x-y|^{n}(2\pi)^{\frac{n}{2}}        }{1-t } \sup_{r>0}\intbar_{B(x,r)} |Df(w)|dw\, dt \;+\; \text{symmetric} \right] \\
& = 2^{n}|x-y| \int_0^{1}(1-t)^{n-1}\! dt\, \big[ MDf(x)+MDf(y) \big] \\
& = C_n|x-y|  \big[ MDf(x)+MDf(y) \big]\,,
\end{align*}
where we used $\mathcal{L}^{n}(B(x,r))= \frac{r^{n}(2\pi)^{\frac{n}{2}}}{n}$.

To conclude the proof for  $f\in W^{1,1}(\R^{n})$ it suffices to approximate $f$ with a sequence $(f_{\eps})\subset C^{1}(\R^{n})$ which converges to $f$ in $W^{1,1}(\R^{n})$ as $\eps \to 0$. \end{proof}

In our main result we will deal with a vector field with partial regularity. This assumption entails a splitting of the space as $\R^N = \R^{n_1} \times \R^{n_2}$ (with $N=n_1+n_2$). We will denote the variable $x \in \R^N$ by $x=(x_1,x_2)$, where $x_1\in\R^{n_1}$ and $x_2 \in \R^{n_2}$. Following~\cite{BBC}, for $\delta_1$, $\delta_2>0$ we consider the $N \times N$ diagonal matrix
\begin{equation}\label{e:matrix}
A = \begin{bmatrix} \delta_1 & & & &  \\ & \delta_1 & & & \\  & & \ddots & & \\ & & & \delta_2 & \\ & & & & \delta_2\end{bmatrix} \,,
\end{equation}
where $\delta_1$ appears at the first $n_1$ entries on the diagonal, and $\delta_2$ at the remaining $n_2$. In other words, we have
$$
A (x_1,x_2) = (\delta_1x_1,\delta_2x_2)\,, \qquad (x_1,x_2) \in \R^{n_1} \times \R^{n_2}\,.
$$

The next two lemmas have been proved in larger generality in~\cite{BBC}. We state them in our setting and give a simpler proof for the reader's convenience. 

\begin{lemma}
\label{LemmaUrho}
Let $f:\R^{N}\rightarrow\R$ be a function in $W^{1,1}(\R^{N})$. Let $A$ be the matrix defined in~\eqref{e:matrix}. Then there exists a nonnegative function ${U}$ such that for a.e. $x,y \in \R^{N}$,
\[
| f(x)-f(y)|\lesssim |A^{-1}[x-y]|\left({U}(x)+{U}(y) \right),
\]
with
\[
{U}(x)= M\left(\sum_{j=1}^{N}|\partial_j f(A\cdot )| A_{jj}  \right)(A^{-1}x).
\]

\end{lemma}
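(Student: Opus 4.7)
The plan is to reduce the anisotropic statement to the isotropic one of Lemma~\ref{firstlemma} by the obvious linear change of variables determined by the matrix~$A$. Concretely, I would introduce the rescaled function
$$
\tilde f(z) := f(Az), \qquad z \in \R^N,
$$
so that $\tilde f \in W^{1,1}(\R^N)$ (the rescaling is bilipschitz) and, since $A$ is diagonal, one has $\partial_j \tilde f(z) = A_{jj}\,\partial_j f(Az)$ for every $j = 1, \ldots, N$. In particular, using the equivalence of the $\ell^1$ and $\ell^2$ norms on $\R^N$ up to a dimensional constant, one obtains
$$
|D\tilde f(z)| \lesssim \sum_{j=1}^N A_{jj}\, |\partial_j f(Az)| =: h(z).
$$

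Next, I would apply the isotropic maximal estimate of Lemma~\ref{firstlemma} to $\tilde f$: for a.e.\ $u, v \in \R^N$,
$$
|\tilde f(u) - \tilde f(v)| \lesssim |u-v|\,\bigl(MD\tilde f(u) + MD\tilde f(v)\bigr) \lesssim |u-v|\,\bigl(Mh(u) + Mh(v)\bigr),
$$
using monotonicity of the maximal operator together with the pointwise bound on $|D\tilde f|$. I would then specialize this to the points $u = A^{-1}x$ and $v = A^{-1}y$ for a.e.\ $x,y \in \R^N$. Since $\tilde f(A^{-1}x) = f(x)$ and $|u-v| = |A^{-1}(x-y)|$, this yields
$$
|f(x)-f(y)| \lesssim |A^{-1}[x-y]|\,\bigl(Mh(A^{-1}x) + Mh(A^{-1}y)\bigr),
$$
which is exactly the claimed inequality with
$$
U(x) := Mh(A^{-1}x) = M\!\left(\sum_{j=1}^N |\partial_j f(A\cdot)|\,A_{jj}\right)\!(A^{-1}x).
$$

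There is no real obstacle: the only points that require care are (i) justifying the chain rule for $\tilde f$, which follows from $f \in W^{1,1}$ and the fact that $A$ is a nondegenerate linear map, and (ii) keeping track of the fact that the resulting constants depend only on $N$ (not on the $\delta_i$), which is automatic because after the change of variables the bound produced by Lemma~\ref{firstlemma} involves only a purely dimensional constant. As usual, one can first do the computation for $f \in C^1 \cap W^{1,1}$ and then pass to the limit by approximation, exactly as in the proof of Lemma~\ref{firstlemma}.
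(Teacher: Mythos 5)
Your proposal is correct and coincides with the paper's proof: both introduce $\tilde f(z)=f(Az)$, apply the isotropic Lemma~\ref{firstlemma} to $\tilde f$, bound $MD\tilde f$ by the maximal function of $\sum_j A_{jj}|\partial_j f(A\cdot)|$ via monotonicity, and then undo the change of variables $z=A^{-1}x$, $w=A^{-1}y$. No differences worth noting.
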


\begin{proof}
The result follows from Lemma~\ref{firstlemma} above. We denote $\tilde{f}(z)=f(Az)$. Then we know that, for a.e. $z$, $w$,
\begin{equation}
\label{eq:tilde_f}
|\tilde{f}(z)-\tilde{f}(w)|\leq C_N |z-w| \big( MD\tilde{f}(z)+MD\tilde{f}(w)\big),
\end{equation}
where in addition we notice
\begin{equation}
\label{eq:MDf_tilde}
MD\tilde{f}(z)\leq M\left(\sum_{j=1}^{N}|\partial_j \tilde{f}|\right)(z) = M\left(\sum_{j=1}^{N}(|\partial_j f(A\cdot)|A_{jj})\right)(z).
\end{equation}
Combining \eqref{eq:tilde_f} and \eqref{eq:MDf_tilde} we have, for a.e.~$z,w$,
\begin{equation}
|f(Az)-f(Aw)|\leq C_N |z-w| \left(   M\sum_{j=1}^{N}(|\partial_j f(A\cdot)|A_{jj})(z)   + M\sum_{j=1}^{N}(|\partial_j f(A\cdot)|A_{jj})(w)  \right).
\end{equation}
Now from the last inequality, taking $x$ and $y$ such that $z=A^{-1}x$ and $w=A^{-1}y$, we obtain the thesis.
\end{proof}

\begin{lemma}[Operator bounds]
\label{LemmaUbounds}
Let ${U}$ be defined as in Lemma \ref{LemmaUrho}. Then we have the estimates
\begin{equation}
\label{eq:Ubounds}
||| {U}|||_{M^1(\R^{N})}\leq C \left( \delta_1 \sum_{j=1}^{n_1} ||\partial_j f||_{L^{1}(\R^{N})}+\delta_2 \sum_{j=n_1+1}^{N} ||\partial_j f||_{L^{1}(\R^{N})}    \right)
\end{equation}
for $\partial_j f \in L^{1}$, and
\begin{equation}
\label{Ubound2}
|| {U}||_{L^{p}(\R^{N})}\leq C \left( \delta_1 \sum_{j=1}^{n_1} ||\partial_j f||_{L^{p}(\R^{N})}+\delta_2 \sum_{j=n_1+1}^{N} ||\partial_j f||_{L^{p}(\R^{N})}    \right)
\end{equation}
for $\partial_j f \in L^{p}$.
\end{lemma}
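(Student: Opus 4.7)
The plan is to reduce everything to the standard (isotropic) maximal estimates of Lemma~\ref{l:maxest} applied to the function
$$
g(z) = \sum_{j=1}^{N} |\partial_j f(Az)|\, A_{jj}\,,
$$
noting that by construction $U(x) = (Mg)(A^{-1}x)$. The only technical ingredient beyond that is the change of variables $x = Az$, which produces factors $\det A = \delta_1^{n_1}\delta_2^{n_2}$ that must be carefully tracked and shown to cancel.

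For the strong estimate \eqref{Ubound2} I would first write
$$
\|U\|_{L^p(\R^N)}^p = \int_{\R^N} |(Mg)(A^{-1}x)|^p\, dx = \det A \int_{\R^N} |Mg(z)|^p\, dz \,,
$$
and apply the strong maximal inequality of Lemma~\ref{l:maxest} (valid since $p>1$) to obtain $\|Mg\|_{L^p}\leq C\|g\|_{L^p}$. Then I would use the triangle inequality and another change of variables to estimate
$$
\|g\|_{L^p(\R^N)} \leq \sum_{j=1}^N A_{jj}\,\|\partial_j f(A\cdot)\|_{L^p(\R^N)} = (\det A)^{-1/p}\sum_{j=1}^N A_{jj}\,\|\partial_j f\|_{L^p(\R^N)}\,.
$$
Combining the two displays, the factor $(\det A)^{1/p}$ from the first change of variables cancels the factor $(\det A)^{-1/p}$ from the second, leaving $\|U\|_{L^p}\lesssim \sum_j A_{jj}\|\partial_j f\|_{L^p}$. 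Splitting the sum into $1\leq j\leq n_1$ (where $A_{jj}=\delta_1$) and $n_1<j\leq N$ (where $A_{jj}=\delta_2$) yields \eqref{Ubound2}.

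For the weak-$L^1$ estimate \eqref{eq:Ubounds} the argument is entirely analogous, but uses the weak maximal inequality from Lemma~\ref{l:maxest} and the definition \eqref{e:wL1}: for every $\lambda>0$,
$$
\lambda\,\mathcal{L}^N(\{U>\lambda\}) = \lambda\,\det A\,\mathcal{L}^N(\{Mg>\lambda\}) \leq C\,\det A\,\|g\|_{L^1(\R^N)}\,,
$$
and another change of variables gives $\|g\|_{L^1} = (\det A)^{-1}\sum_j A_{jj}\|\partial_j f\|_{L^1}$, so that $\det A$ cancels and one obtains a bound independent of $\lambda$. Taking the supremum in $\lambda$ and splitting the sum as before delivers \eqref{eq:Ubounds}.

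There is no serious obstacle here: the only point that requires care is the bookkeeping of the $\det A$ factors in the two successive changes of variables, which must cancel exactly. The diagonal structure of $A$ is essential for the change of variable to be a simple anisotropic dilation and for the weights $A_{jj}$ to enter linearly in the bound.
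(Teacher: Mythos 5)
Your proof is correct and follows essentially the same route as the paper's: reduce to the isotropic maximal estimates of Lemma~\ref{l:maxest} applied to $g(z) = \sum_{j}|\partial_j f(Az)|A_{jj}$ (which the paper denotes $\sum_j|\partial_j\tilde f|$ with $\tilde f = f(A\cdot)$), and track the Jacobian $\det A = \delta_1^{n_1}\delta_2^{n_2}$ through the two changes of variable until it cancels. The only difference is presentational: you make the $\det A$ bookkeeping explicit, whereas the paper keeps the composition with $A^{-1}$ inside the weak and strong norms and lets the cancellation happen implicitly.
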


\begin{proof}
As in Lemma \ref{LemmaUrho} we consider $\tilde{f}(z)=f(Az)$. 
We exploit the estimates in Lemma~\ref{l:maxest} to the effect that
\begin{align*}
|||{U}|||_{M^1(\R^{N})} & = \left|\left|\left| M\sum_{j=1}^{N}(|\partial_j f(A\cdot)|A_{jj})(A^{-1}\cdot)\right|\right|\right|_{M^{1}(\R^{N})} =\left|\left|\left| M\sum_{j=1}^{N}|\partial_j \tilde{f}|(A^{-1}\cdot)\right|\right|\right|_{M^{1}(\R^{N})}  \\
& \leq C \left|\left| \sum_{j=1}^{N}|\partial_j \tilde{f}|(A^{-1}\cdot)\right|\right|_{L^{1}(\R^{N})} \leq C\sum_{j=1}^{N} \| (\partial_j \tilde{f})(A^{-1}\cdot) \|_{L^{1}(\R^{N})} \\
& = C \sum_{j=1}^{N} \| (\partial_j f(A\cdot) A_{jj} )(A^{-1}\cdot)\|_{L^{1}(\R^{N})} = C \sum_{j=1}^{N} A_{jj}\| \partial_j f \|_{L^{1}(\R^{N})} \,, 
\end{align*}
which is equation \eqref{eq:Ubounds}. With similar computations we can obtain \eqref{Ubound2}.
\end{proof}

We close this section with the following interpolation lemma, which allows to estimate the $L^1$ norm in terms of the weak-$L^1$ norm defined in~\eqref{e:wL1}, with a logarithmic dependence on higher integrability norms. 

\begin{lemma}[Interpolation]\label{l:inter}
Let $u: \Omega \to [0,+\infty)$ be a nonnegative measurable function, where $\Omega\subset\IR^n$ has finite measure. Then for every $1<p<\infty$, we have the interpolation estimate
$$
\|u\|_{L^1(\om)}\leq\frac{p}{p-1}|||u|||_{M^1(\om)}\left[1+\log\left(\displaystyle\frac{\|u\|_{L^p(\om)}}{|||u|||_{M^1(\om)}}\La^n(\om)^{1-\frac{1}{p}}\right)\right] \,,
$$
and analogously for $p=\infty$
$$
\|u\|_{L^1(\om)}\leq |||u|||_{M^1(\om)}\left[1+\log\left(\displaystyle\frac{\|u\|_{L^\infty(\om)}}{|||u|||_{M^1(\om)}}\La^n(\om) \right)\right] \,.
$$
\end{lemma}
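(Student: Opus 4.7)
The plan is to exploit the layer-cake identity
\[
\|u\|_{L^1(\Omega)} = \int_0^\infty \lambda(t)\,dt, \qquad \lambda(t) := \mathcal{L}^n\bigl(\{x \in \Omega : u(x) > t\}\bigr),
\]
together with three elementary upper bounds on the distribution function: the trivial one $\lambda(t)\le \mathcal{L}^n(\Omega)$, the weak-type one $\lambda(t)\le |||u|||_{M^1(\Omega)}/t$ coming straight from \eqref{e:wL1}, and the Chebyshev bound $\lambda(t)\le \|u\|_{L^p(\Omega)}^p/t^p$. Writing $M := |||u|||_{M^1(\Omega)}$ and $P := \|u\|_{L^p(\Omega)}$, I would split the layer-cake integral at a threshold $T>0$ to be chosen, using the minimum of the first two bounds on $[0,T]$ and the Chebyshev bound on $[T,\infty)$.

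A direct computation then gives, splitting further at the crossing point $t=M/\mathcal{L}^n(\Omega)$,
\[
\int_0^T \min\!\Bigl(\mathcal{L}^n(\Omega),\tfrac{M}{t}\Bigr)\,dt \;=\; M + M\log\!\Bigl(T\mathcal{L}^n(\Omega)/M\Bigr), \qquad \int_T^\infty \tfrac{P^p}{t^p}\,dt \;=\; \tfrac{P^p}{(p-1)\,T^{p-1}}.
\]
Summing the two contributions and optimizing in $T$, the natural balancing choice $T^{p-1} = P^p/M$ makes the tail contribute exactly $M/(p-1)$, while the logarithmic term rewrites as $\tfrac{p}{p-1}\,M\log\!\bigl(\mathcal{L}^n(\Omega)^{1-1/p}\,P/M\bigr)$. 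Collecting constants produces precisely the prefactor $\tfrac{p}{p-1}$ and the form of the bound stated in the lemma. The endpoint case $p=\infty$ is simpler: since $\lambda(t)=0$ for $t>\|u\|_{L^\infty(\Omega)}$, one integrates $\min(\mathcal{L}^n(\Omega),M/t)$ only up to $\|u\|_{L^\infty(\Omega)}$ and reads off the result directly.

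The only point requiring care is to justify that the argument of the logarithm in the statement is at least $1$, so that the right-hand side is unambiguous. This amounts to the interpolation inequality $M \le \mathcal{L}^n(\Omega)^{1-1/p}\,P$, which itself follows by maximizing $t\lambda(t)\le t\min(\mathcal{L}^n(\Omega),P^p/t^p)$ in $t$. Beyond this observation there is no genuine obstacle: the whole argument is a one-parameter optimization, and the characteristic factor $p/(p-1)$ emerges naturally from balancing the weak-$L^1$ behaviour of $\lambda$ near the origin against its $L^p$ behaviour at infinity.
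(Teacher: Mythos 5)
Your proof is correct, and it is exactly the standard argument: the paper states this lemma without proof (it is imported from the Crippa--De Lellis/Bouchut--Crippa theory), and the classical derivation there is the same layer-cake decomposition with the weak-$L^1$ and Chebyshev bounds, optimized over the splitting threshold $T$. Your computations check out, including the key identity $\int_0^T\min(\mathcal L^n(\Omega),M/t)\,dt = M + M\log(T\mathcal L^n(\Omega)/M)$ (valid since the balancing choice $T=(P^p/M)^{1/(p-1)}$ lies above the crossing point $M/\mathcal L^n(\Omega)$, which is equivalent to your observation $M\le \mathcal L^n(\Omega)^{1-1/p}P$), and the constant $p/(p-1)$ emerges correctly from the tail term $M/(p-1)$ plus the $\frac{p}{p-1}$ factor pulled out of the logarithm. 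The $p=\infty$ case is also handled correctly.
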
 

%%%%%%%%%%%%%%%%%%%%%%%%%% 

\section{Main result and corollaries}

\subsection{Assumptions on the vector field}
\label{ss:assumpt}

We recall that we consider a splitting of the space as $\R^N = \R^{n_1}\times\R^{n_2}$ and that we denote the variable by~$x=(x_1,x_2)$, with $x_1\in \R^{n_1}$ and $x_2 \in \R^{n_2}$. We are dealing with a vector field\break $b : (0,T)\times \R^{n_1}\times\R^{n_2} \to \R^{n_1}\times\R^{n_2}$ for which we assume the following regularity:

\begin{equation}
\label{hp_on_b}
\begin{split}
\mathbf{(R2)}: \quad &b(s,x_1,x_2)=\big(b_1(s,x_1),b_2(s,x_1,x_2)\big) \in \R^{n_1}\times \R^{n_2}=\R^{N} \\
&\quad b_1(s,x_1)\in L^1\big((0,T);W^{1,p}_{x_1}(\R^{n_1})\big) \\
&\quad b_2(s,x_1,x_2) \in L^{1}\big((0,T)\times\R^{n_2}_{x_2} ; W^{\alpha,1}_{x_1}(\R^{n_1})\big)
\cap L^{1} \big((0,T)\times\R^{n_1}_{x_1} ; W^{1,p}_{x_2}(\R^{n_2})\big)\,,
\end{split}
\end{equation}
for some given $p>1$ and $1/2 < \alpha <1$.

Moreover, we will assume that 
\begin{equation}
\label{hp_integ}
\begin{split}
\mathbf{(R3)}: \quad &b(t,x_1,x_2) \in L^p_{\rm loc}((0,T)\times\R^N)\,.
\end{split}
\end{equation}
Also recall that suitable growth conditions on $b$ have been assumed in {\bf (R1)}.

Let us introduce some further notation that will be used in the following.
 
We denote by $D_i{b}_j=D_{x_i}{b}_j$ the partial derivatives in distributional sense. We set $D_1{b}_1=p(t,x_1)$, $D_1{b}_2=q(t,x_1,x_2)$, and $D_2{b}_2 =r(t,x_1,x_2)$. Then we have
\begin{equation}\label{e:matreg}
\begin{split}
& Db=
\begin{pmatrix}
D_1b_1 & D_2b_1 \\
D_1b_2 & D_2 b_2
\end{pmatrix}
=
\begin{pmatrix}
p & 0 \\
q & r
\end{pmatrix}
\in
\begin{pmatrix}
L^{1}_{\text{$x_2$,loc}}L^{p}_{x_1} & 0 \\
\text{distribution} & L^{1}_{x_1}L^{p}_{x_2}
\end{pmatrix}\,.
\end{split}
\end{equation}

\subsection{Main estimate for the Lagrangian flow}

\begin{theorem} \label{theorem}
Let $b$ and $\bar{b}$ be two vector fields satisfying assumptions {\bf (R1)}. Assume the following:
\begin{itemize}
\item The second component of $\bar{b}$ satisfies $\bar{b}_2 \in L^{1}\big((0,T)\times\R^{n_2}_{x_2} ; W^{\alpha,1}_{x_1}(\R^{n_1})\big)$, 
\item The vector field $b$ satisfies {\bf (R2)} and {\bf (R3)}. 
\end{itemize}
Let $X$ and $\bar{X}$ be regular Lagrangian flows associated to $b$ and $\bar{b}$ respectively, with compressibility constants $L$ and $\bar{L}$. Then the following holds. For every positive $\gamma$, $r$ and $\eta$ there exists $\la >0$ and $C_{\gamma,r,\eta}>0$ such that
\begin{equation}\label{e:mainest}
\mathcal{L}^{N}\left(B_r \cap \left\lbrace |X(s,\cdot)-\bar{X}(s,\cdot)|>\gamma \right\rbrace \right) \leq C_{\gamma,r,\eta} \| b-\bar{b} \|_{L^{1}((0,T)\times B_{\la})}+ \eta
\end{equation}
for all $s \in [0,T]$, where $C_{\gamma,r,\eta}$ depends on $L$, $\bar{L}$, the bound for $\bar{b}_2$ in $L^{1}\big((0,T)\times\R^{n_2}_{x_2} ; W^{\alpha,1}_{x_1}(\R^{n_1})\big)$, the bound for the decomposition of $\bar{b}$ as in {\bf (R1)}, 
and the various bounds for $b$ involved in the assumptions~{\bf (R1)}, {\bf (R2)}, and {\bf (R3)}.
\end{theorem}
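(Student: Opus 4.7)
The plan is to adapt the Lagrangian functional approach of~\cite{CDL,BBC} with the anisotropic weights from~\eqref{e:anfunc} and an additional mollification of $b_2$ in the variable $x_1$ to bypass the lack of a full derivative there. Fix $\gamma,r,\eta>0$. First I would use Lemma~\ref{estsuper}, applied to both $X$ and $\bar X$, to choose $\lambda=\lambda(\eta)$ so large that
$$
\mathcal{L}^N\bigl(B_r\setminus(G_\lambda\cap \bar G_\lambda)\bigr)<\eta/2,
$$
which reduces the problem to estimating the measure of $\{|X-\bar X|>\gamma\}\cap B_r\cap G_\lambda\cap\bar G_\lambda$. Let $b_2^\eps$ denote the mollification of $b_2$ in the variable $x_1$ only, at scale $\eps>0$, and introduce for $0<\delta_1\le\delta_2$ the functional
\begin{equation}\label{e:functional_proposal}
\Phi(s) = \int_{B_r\cap G_\lambda\cap\bar G_\lambda} \log\left(1+\frac{|X_1(s,x)-\bar X_1(s,x)|}{\delta_1}+\frac{|X_2(s,x)-\bar X_2(s,x)|}{\delta_2}\right)dx.
\end{equation}

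Next I would differentiate $\Phi$ in $s$, use the ODE satisfied by $X$ and $\bar X$, and split the numerator via the triangle inequality as
$$
b(s,X)-\bar b(s,\bar X) = \bigl[b(s,X)-b(s,\bar X)\bigr] + \bigl[b(s,\bar X)-\bar b(s,\bar X)\bigr],
$$
and further insert $\pm b_2^\eps$ in the second component. The $b-\bar b$ contribution, after a change of variables along $\bar X$ (using compressibility $\bar L$), produces a term of order $\frac{1}{\delta_1}\|b-\bar b\|_{L^1((0,T)\times B_\lambda)}$; the term $|b_2(s,\bar X)-b_2^\eps(s,\bar X)|/\delta_2$ produces, by Lemma~\ref{lemma_W^{s,p}}, a contribution of order $\eps^\alpha/\delta_2$. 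For the remaining $b(s,X)-b(s,\bar X)$ piece I would apply Lemma~\ref{firstlemma} to $b_1$ (using the ratio $|X_1-\bar X_1|/\delta_1$ which is absorbed by the denominator of $\Phi'$), and apply the anisotropic Lemma~\ref{LemmaUrho} to $b_2^\eps$ (using that $|A^{-1}(X-\bar X)|$ is comparable to the weighted displacement in $\Phi$). This yields
$$
\Phi'(s) \lesssim \frac{1}{\delta_1}\int|b-\bar b|(s,\bar X)\,dx + \frac{\eps^\alpha}{\delta_2} + \int\bigl[MD_1b_1 + \tfrac{1}{\delta_2}U^\eps\bigr](s,X)\,dx + \text{symmetric},
$$
with $U^\eps$ as in Lemma~\ref{LemmaUrho} applied to $b_2^\eps$.

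The core estimate comes from Lemma~\ref{LemmaUbounds}: $U^\eps$ satisfies
$$
\|U^\eps(s,\cdot)\|_{M^1} \lesssim \delta_1\|D_1 b_2^\eps(s,\cdot)\|_{L^1} + \delta_2\|D_2 b_2^\eps(s,\cdot)\|_{L^1(B_\lambda)},
$$
and, using also the strong bound for $MD_2 b_2^\eps$ in $L^p$ via the $W^{1,p}_{x_2}$ regularity, I would apply the interpolation Lemma~\ref{l:inter} on the ball $B_\lambda$ (after changing variables by $X$, using compressibility $L$) to pass from weak-$L^1$ to $L^1$ at the price of a logarithmic factor. Combined with Lemma~\ref{lemma_W^{s,p}} giving $\|D_1 b_2^\eps\|_{L^1}\lesssim \eps^{\alpha-1}\|b_2\|_{W^{\alpha,1}_{x_1}}$, I obtain, after integration in $s$,
$$
\Phi(T)\lesssim \frac{1}{\delta_1}\|b-\bar b\|_{L^1((0,T)\times B_\lambda)} + \frac{\eps^\alpha}{\delta_2} + C_\lambda \log\!\Bigl(\tfrac{1}{\delta_1\delta_2}\Bigr)\Bigl(\|D_1 b_1\| + \tfrac{\delta_1}{\delta_2}\eps^{\alpha-1} + \|D_2 b_2\|\Bigr).
$$

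Finally, I would balance the scales exactly as heuristically indicated in~\eqref{e:finheur}: set $\delta_1=\delta_2\eps^{1-\alpha}$, which kills the factor $\tfrac{\delta_1}{\delta_2}\eps^{\alpha-1}$, and then choose $\eps=\delta_2^{\beta}$ with $\beta>1/(2\alpha-1)$, so that $\eps^\alpha/\delta_2=\delta_2^{\alpha\beta-1}\to 0$ as $\delta_2\to 0$ (this is precisely where the assumption $\alpha>1/2$ is used). With these choices the non-stability part of $\Phi(T)$ is bounded by $C_\lambda\log(1/\delta_2)$. Applying Chebyshev's inequality,
$$
\mathcal{L}^N\!\bigl(B_r\cap G_\lambda\cap\bar G_\lambda\cap\{|X-\bar X|>\gamma\}\bigr)\le \frac{\Phi(s)}{\log(1+\gamma/\delta_2)},
$$
and choosing $\delta_2$ small enough in terms of $\eta$ makes the deterministic term $C_\lambda\log(1/\delta_2)/\log(1+\gamma/\delta_2)<\eta/2$, while the stability term yields exactly $C_{\gamma,r,\eta}\|b-\bar b\|_{L^1((0,T)\times B_\lambda)}$ with $\lambda=\lambda(\eta)$. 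Adding the measure of the complement of the sublevels gives the full $\eta$ and concludes the proof. The main obstacle is the simultaneous balancing of the three scales $\delta_1,\delta_2,\eps$ so that (i)~the mollification error $\eps^\alpha/\delta_2$ stays bounded, (ii)~the anisotropic derivative term $\tfrac{\delta_1}{\delta_2}\eps^{\alpha-1}$ is controlled, and (iii)~after dividing by $\log(1+\gamma/\delta_2)$ everything except the stability term becomes arbitrarily small; this forces the precise use of $\alpha>1/2$ and of the weak-to-strong interpolation of Lemma~\ref{l:inter}.
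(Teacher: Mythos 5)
Your overall architecture is the same as the paper's (anisotropic log functional as in~\eqref{e:functional}, mollification of $b_2$ in $x_1$ at a third scale~$\eps$, anisotropic maximal estimates via Lemmas~\ref{LemmaUrho}--\ref{LemmaUbounds}, and interpolation via Lemma~\ref{l:inter}), but the way you dispatch the interpolation step and the way you then tie the three scales together both contain gaps that break the final limit.

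First, the interpolation of Lemma~\ref{l:inter} must be applied \emph{only} to the piece of the maximal bound coming from $D_1 b_2^\eps$, not to the whole thing. The paper splits the difference-quotient bound, via subadditivity of the operator $U$, into the three pieces $U_{\tilde p}$, $U_{\tilde q^{\eps}}$, $U_{\tilde r^{\eps}}$; the first and third come from $D_1 b_1$ and $D_2 b_2^\eps$, which are in $L^p$ with $p>1$, so they are estimated \emph{directly} in $L^1(\Omega')$ by the strong maximal inequality (Lemma~\ref{l:maxest}) and the embedding $L^p\hookrightarrow L^1$ on the bounded set, giving the bare constant $C$ at the end of Step~5 \emph{without} any logarithmic factor. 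Only $U_{\tilde q^{\eps}}$, whose $L^1$-norm blows up as $\eps\to 0$, needs the weak-to-strong interpolation (paired with the $L^p$ bound coming from {\bf (R3)} on the other branch of the minimum), and it alone picks up the $\log$. In your estimate the factor $\log(1/(\delta_1\delta_2))$ multiplies $\|D_1 b_1\|$ and $\|D_2 b_2\|$ as well; since $\log(1/(\delta_1\delta_2))/\log(1+\gamma/\delta_2)$ is bounded away from zero as $\delta_2\to 0$ (it tends to a constant $\ge 2$ under any power-law relation between $\delta_1,\delta_2$), this contribution does \emph{not} vanish, and the claim that the deterministic part can be made $<\eta/2$ fails.

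Second, even after correcting the interpolation, your scale coupling $\delta_1=\delta_2\,\eps^{1-\alpha}$ is exactly the borderline one suggested by the heuristic~\eqref{e:finheur}, and it is too weak for the rigorous argument. It makes $\frac{\delta_1}{\delta_2}\eps^{\alpha-1}=1$, so the interpolation term becomes $1\cdot\log(\cdot)\gtrsim\log(1/\delta_2)$, and dividing by $\log(1+\gamma/\delta_2)$ again yields a nonvanishing constant. The heuristic in the introduction does not see this because it omits the $\log$ produced by Lemma~\ref{l:inter}. The paper's actual choice introduces an extra small parameter $0<\mu<1$ and sets $\eps^{\alpha-1}=\beta^{\mu-1}$ with $\beta=\delta_1/\delta_2$, so that the prefactor is $\beta^\mu\to 0$ (strictly small, not just bounded) while the remaining exponent $\tfrac{2\alpha-\alpha\mu-1}{1-\alpha}$ stays positive precisely because $\alpha>1/2$ allows $\mu>0$; one then sends $\delta_2\to 0$ \emph{first}, and afterwards $\beta\to 0$, rather than linking all three scales to $\delta_2$ by a single power law as you do. Your choice corresponds to $\mu=0$, which is the excluded endpoint.

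A minor additional remark: your decomposition $b(s,X)-\bar b(s,\bar X)=[b(s,X)-b(s,\bar X)]+[b(s,\bar X)-\bar b(s,\bar X)]$ only mollifies $b_2$ and never $\bar b_2$; the paper instead symmetrizes, mollifying both, which is why the hypothesis $\bar b_2\in L^1_{t,x_2}W^{\alpha,1}_{x_1}$ appears in the statement. Your decomposition is legitimate and would in principle avoid that hypothesis, but you should still insert $\pm b_2^\eps$ at \emph{both} evaluation points $X$ and $\bar X$ to isolate $b_2^\eps(s,X)-b_2^\eps(s,\bar X)$ before applying Lemma~\ref{LemmaUrho} (one cannot apply it to $b_2$ itself, which is not $W^{1,1}$ in $x_1$).
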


\begin{proof} We exploit the anisotropic functional
\begin{equation}\label{e:functional}
\Phi_{\delta_1,\delta_2}(s)= \int_{B_r\cap G_{\la}\cap \bar{G}_{\la}} \log\left( 1+ | A^{-1}[X(s,x)-\bar{X}(s,x)] |  \right) \, dx\,,
\end{equation}
where the matrix $A$ has been defined in~\eqref{e:matrix} and $G_{\la}$ (respectively, $\bar{G}_{\la}$) are the sublevels of the regular Lagrangian flow $X$ (respectively, $\bar{X}$) defined as in~\eqref{e:sublevels}.

\medskip

\noindent\textbf{Step 1:} \textit{Regularization of the vector field.} 
We regularize ${b}_2$ by convolution in $x_1$. Let $\varphi^{\eps}$ be a standard mollifier in $\R^{n_1}$. We denote the regularization of ${b}_2$ by
\[   
{b}_2^{\eps}(t,x_1,x_2)={b}_2 (t,x_1,x_2) \ast_{x_1} \varphi^{\eps}(x_1)\,,
\qquad\quad\text{for $t$ and $x_2$ fixed,}
\]
and we further denote $b^{\eps}= (b_1,{b}_2^{\eps})$. Moreover, $q^{\eps}$ and $r^{\eps}$ are associated to $b^\eps$ as in~\eqref{e:matreg}.

Due to standard properties of the convolution we have that
$b^{\eps}\rightarrow b$ and $r^{\eps}\rightarrow r$ in $L^{1}_{\text{loc}}(\R^{N})$.
Also recall the rates of convergence and blow-up proved in Lemma~\ref{lemma_W^{s,p}}.

\medskip

\noindent\textbf{Step 2:} \textit{Time differentiation.} 
By differentiating the functional $\Phi_{\delta_1,\delta_2}(s)$ with respect to time we get
\begin{align*}
& \Phi'_{\delta_1,\delta_2}(s) \leq  \int_{B_r\cap G_{\la}\cap \bar{G}_{\la}} \frac{|A^{-1}[b(s,X)-\bar{b}(s,\bar{X})]|}{1+|A^{-1}[X-\bar{X}]|}  dx\\
& \leq \int_{B_r\cap G_{\la}\cap \bar{G}_{\la}} \frac{|A^{-1}[b(s,X)-b^{\eps}(s,X)]|}{1+|A^{-1}[X-\bar{X}]|} + \frac{|A^{-1}[\bar{b}^{\eps}(s,\bar{X})-\bar{b}(s,\bar{X})]|}{1+|A^{-1}[X-\bar{X}]|} +\frac{|A^{-1}[b^{\eps}(s,X)-\bar{b}^{\eps}(s,\bar{X})]|}{1+|A^{-1}[X-\bar{X}]|}  dx \\
 &\leq \frac{L}{\delta_1} \| b-b^{\eps}(s,\cdot)\|_{L^{1}(B_{\la})} + \frac{\bar{L}}{\delta_1} \| \bar{b}-\bar{b}^{\eps}(s,\cdot)\|_{L^{1}(B_{\la})}   \\
&\;\;+ \int_{B_r\cap G_{\la}\cap \bar{G}_{\la}} \frac{|A^{-1}[b^{\eps}(s,X) -b^{\eps}(s,\bar{X})+ b^{\eps}(s,\bar{X})   -\bar{b}^{\eps}(s,\bar{X})]|}{1+|A^{-1}[X-\bar{X}]|} \, dx\\
&\leq \frac{L}{\delta_1} \| b-b^{\eps}(s,\cdot)\|_{L^{1}(B_{\la})} + \frac{\bar{L}}{\delta_1} \| \bar{b}-\bar{b}^{\eps}(s,\cdot)\|_{L^{1}(B_{\la})} + \frac{\bar{L}}{\delta_1} \| b^{\eps}-\bar{b}^{\eps}(s,\cdot)\|_{L^{1}(B_{\la})}   \\
 &\;\;+\int_{B_r\cap G_{\la}\cap \bar{G}_{\la}} \frac{|A^{-1}[b^{\eps}(s,X) -b^{\eps}(s,\bar{X})]|}{1+|A^{-1}[X-\bar{X}]|} \, dx\\
 & \leq \frac{L}{\delta_1} \| b-b^{\eps}(s,\cdot)\|_{L^{1}(B_{\la})} + \frac{\bar{L}}{\delta_1} \| \bar{b}-\bar{b}^{\eps}(s,\cdot)\|_{L^{1}(B_{\la})} + \frac{\bar{L}}{\delta_1} \| b^{\eps}-\bar{b}^{\eps}(s,\cdot)\|_{L^{1}(B_{\la})}   \\
 & \;\; + \int_{B_r\cap G_{\la}\cap \bar{G}_{\la}} \min\left\lbrace |A^{-1}[b^{\eps}(s,X) -b^{\eps}(s,\bar{X})]|, \frac{|A^{-1}[b^{\eps}(s,X) -b^{\eps}(s,\bar{X})]|}{|A^{-1}[X-\bar{X}]|}       \right\rbrace dx.
\end{align*}

\medskip

\noindent\textbf{Step 3: }\textit{Bounds with maximal operators.} Integrating in time 
and recalling the definition of the matrix~$A$ in~\ref{e:matrix} we get
\begin{equation}
\label{eq:Step2}
\begin{split}
& \Phi_{\delta_1,\delta_2}(\tau) \leq  \frac{L}{\delta_1} \| b-b^{\eps}\|_{L^{1}((0,\tau)\times B_{\la})} + \frac{\bar{L}}{\delta_1} \| \bar{b}-\bar{b}^{\eps}\|_{L^{1}((0,\tau)\times B_{\la})} + \frac{\bar{L}}{\delta_1} \| b^{\eps}-\bar{b}^{\eps}\|_{L^{1}((0,\tau)\times B_{\la})}   \\
 & +\!\! \int_0^{\tau}\!\!\int_{B_r\cap G_{\la}\cap \bar{G}_{\la}} \!\!\min\Big\lbrace |A^{-1}[b^{\eps}(s,X) -b^{\eps}(s,\bar{X})]|, \frac{1}{\delta_1}\frac{|b_1(s,X) -b_1(s,\bar{X})|}{|A^{-1}[X-\bar{X}]|} \\
& \hspace{9cm} \!+\!\frac{1}{\delta_2}\frac{|b_2^{\eps}(s,X) -b_2^{\eps}(s,\bar{X})|}{|A^{-1}[X-\bar{X}]|}       \Big\rbrace dx  ds.
\end{split}
\end{equation}

Lemmas~\ref{LemmaUrho} and~\ref{LemmaUbounds} can be easily extended to vector valued functions. We would like to apply these lemmas to $b^{\eps}$, which is only locally $W^{1,1}$ in $\R^N$, as the first component $b_1$ does not depend on~$x_2$. This can be done by defining a new vector field $\tilde{b}^{\eps}$ as the smooth cut-off of $b^{\eps}$ on the ball of radius $2\la$, i.e. $\tilde{b}^{\eps}=b^{\eps}\cdot \chi_{B_\la}=(b_1\cdot \chi_{B_\la},b^{\eps}_2\cdot \chi_{B_\la})=(\tilde{b}_1,\tilde{b}^{\eps}_2)$, where $\chi_{B_\la}$ is a smooth function with value~$1$ on $B_{2\la}$ and $0$ on $\R^{N}\setminus B_{2\la + 1}$, and by using suitable truncated maximal functions in the maximal estimates. We define $\tilde{p}$, $\tilde{q}$, $\tilde{r}$, $\tilde{q}^{\eps}$ and $\tilde{r}^{\eps}$ as the partial derivatives of $\tilde{b}$ $(=b\cdot \chi_{B_{\la}})$ and $\tilde{b}^{\eps}$. 

Lemma \ref{LemmaUrho} applied to $\tilde{b}_1$ and $\tilde{b}^{\eps}_2$ yields 
\begin{equation}
\frac{|\tilde{b}_1(s,x)-\tilde{b}_1(s,\bar{x})|}{|A^{-1}[x-\bar{x}]|}\lesssim U_{\tilde{p}}(x)+  U_{\tilde{p}}(\bar{x}),\,\, 
 \end{equation}
and
 \begin{equation}
 \frac{|\tilde{b}_2^{\eps}(s,x)-\tilde{b}_2^{\eps}(s,\bar{x})|}{|A^{-1}[x-\bar{x}]|}\lesssim {U}_{\tilde{q}^{\eps},\tilde{r}^{\eps}}(x)+  {U}_{\tilde{q}^{\eps},\tilde{r}^{\eps}}(\bar{x})
\end{equation} 
for $s\in [0,T]$, and for a.e.~$x, \bar{x} \in \R^{N}$.

By subadditivity of ${U}$ we can estimate

\[
{U}_{\tilde{q}^{\eps},\tilde{r}^{\eps}} \leq {U}_{\tilde{q}^{\eps}}+{U}_{\tilde{r}^{\eps}},
\]
implying that 
\begin{align*}
&\frac{|\tilde{b}_1(s,x)-\tilde{b}_1(s,\bar{x})|}{|A^{-1}[x-\bar{x}]|}\lesssim U_{\tilde{p}}(x) + U_{\tilde{p}}(\bar{x}),\,\, \\
 &\frac{|\tilde{b}_2^{\eps}(s,x)-\tilde{b}_2^{\eps}(s,\bar{x})|}{|A^{-1}[x-\bar{x}]|}\lesssim {U}_{\tilde{q}^{\eps}}(x)+U_{\tilde{r}^{\eps}}(x)+{U}_{\tilde{q}^{\eps}}(\bar{x})+{U}_{\tilde{r}^{\eps}}(\bar{x}).
\end{align*}

\smallskip
\noindent \textbf{Step 4: } \textit{Estimates for the maximal operators.} 
Let $\Omega=(0,\tau)\times \big( B_r \cap G_{\la}\cap \bar{G}_{\la} \big) \subset \R^{N+1}$. We can estimate the last term of the sum \eqref{eq:Step2} with
\begin{align*}
&  \int_{\Omega} \min\Big\lbrace |A^{-1}[b^{\eps}(s,X)-b^{\eps}(s,\bar{X})]|,\frac{1}{\delta_1} \left(U_{\tilde{p}}(s,X)+U_{\tilde{p}}(s,\bar{X})\right) \\
& \qquad\qquad\qquad\qquad\qquad + \frac{1}{\delta_2}\left( (U_{\tilde{q}^{\eps}}+U_{\tilde{r}^{\eps}}) (s,X)+ (U_{\tilde{q}^{\eps}}+U_{\tilde{r}^{\eps}})(s,\bar{X}) \right)  \Big\rbrace dx\, ds =: \tilde{\Phi}_{\delta_1,\delta_2}(\tau) \,.
\end{align*}

Lemma~\ref{LemmaUbounds} implies
\begin{align*}
||| U_{\tilde{q}^{\eps}}|||_{M^{1}((0,T)\times B_{\la})} \lesssim \delta_1 \| \tilde{q}^{\eps}\|_{L^{1}((0,T)\times \R^{N})}=\delta_1 \| \tilde{q}^{\eps}\|_{L^{1}((0,T)\times B_{2\la+1})}\leq \delta_1 \| {q}^{\eps}\|_{L^{1}((0,T)\times B_{2\la+1})} =: \delta_1 \psi(\eps)\,.
\end{align*}
Notice that the quantity $\psi(\eps)$ at the right hand side could a priori blow up as $\eps \rightarrow 0$, as we are not assuming that $q=D_1b_2$ is integrable.

Splitting the minima once again, we obtain
\begin{align*}
& \tilde{\Phi}_{\delta_1,\delta_2}(\tau) \leq \int_{\Omega} \min\left\lbrace |A^{-1}[b^{\eps}(s,X)-b^{\eps}(s,\bar{X})]|, \frac{1}{\delta_2}\left( U_{\tilde{q}^{\eps}}(s,X)+ U_{\tilde{q}^{\eps}}(s,\bar{X}) \right)  \right\rbrace dx\, ds \\
& \quad + \int_{\Omega} \min\left\lbrace |A^{-1}[b^{\eps}(s,X)-b^{\eps}(s,\bar{X})]|, \frac{1}{\delta_2}\left( U_{\tilde{r}^{\eps}}(s,X)+ U_{\tilde{r}^{\eps}}(s,\bar{X}) \right)  \right\rbrace dx\, ds \\
& \quad + \int_{\Omega} \min\left\lbrace |A^{-1}[b^{\eps}(s,X)-b^{\eps}(s,\bar{X})]|, \frac{1}{\delta_1}\left( U_{\tilde{p}}(s,X)+ U_{\tilde{p}}(s,\bar{X})\right)  \right\rbrace dx\, ds \\
& = \int_{\Omega}  \varphi_1(s,X,\bar{X}) \, dxds+ \int_{\Omega}  \varphi_2(s,X,\bar{X}) \, dxds +\int_{\Omega}  \varphi_3(s,X,\bar{X}) \, dxds \,.
\end{align*}

Let $\Omega'=(0,\tau)\times B_{\la} \subset \R^{N+1}$. Using the first element of the minimum and relying on assumption {\bf (R3)} we can estimate
\begin{equation*}
\| \varphi_1 \|_{L^{p}(\Omega)} \leq \frac{L^{1/p}+\bar{L}^{1/p}}{\delta_1} \|b^{\eps}\|_{L^{p}(\Omega')} \lesssim \frac{1}{\delta_1} \|b^{\eps}\|_{L^{p}(\Omega')}\lesssim \frac{1}{\delta_1} \|b\|_{L^{p}(\Omega')} \simeq \frac{1}{\delta_1}.
\end{equation*}

Exploiting the second term of the minimum, we get
$$
 |||\varphi_1|||_{M^{1}(\Omega)} \leq \frac{1}{\delta_2} ||| U_{\tilde{q}^{\eps}}(X)+U_{\tilde{q}^{\eps}}(\bar{X}) |||_{M^{1}(\Omega)} \lesssim \frac{1}{\delta_2} |||U_{\tilde{q}^{\eps}}|||_{M^{1}(\Omega' )}\lesssim \frac{\delta_1}{\delta_2}\|q_{\eps}\|_{L^{1}((0,T)\times B_{2\la+1})}= \frac{\delta_1}{\delta_2}\psi(\eps) \,.
$$
For $\varphi_2$ and $\varphi_3$ using assumption~{\bf (R2)} we have
\begin{equation}\label{L11}
\|\varphi_2\|_{L^{1}(\Omega)}\lesssim \frac{1}{\delta_2}\| U_{\tilde{r}^{\eps}}\|_{L^{1}(\Omega')} \lesssim_{\la}\frac{1}{\delta_2}\| U_{\tilde{r}^{\eps}}\|_{L^1((0,T);L^{p}(B_\la))} \lesssim \frac{\delta_2}{\delta_2} \|\tilde{r}_2^{\eps}\|_{L^{1}((0,T);L^p(\R^{N}))} \lesssim  C
\end{equation}
and 
\begin{equation}\label{L12}
\|\varphi_3\|_{L^{1}(\Omega)} \lesssim \frac{1}{\delta_1}\|U_{\tilde{p}}\|_{L^{1}(\Omega')} \lesssim_{\la}\frac{1}{\delta_1}\|U_{\tilde{p}}\|_{L^1((0,T);L^{p}(B_\la))}\lesssim \frac{\delta_1}{\delta_1}\|\tilde{p}_2\|_{L^{1}((0,T);L^p(\R^{N}))} \lesssim C\,.
\end{equation}

\medskip

\noindent\textbf{Step 5:} \textit{Interpolation Lemma.} We can apply now Lemma~\ref{l:inter} to $\varphi_1$, to the effect that
\begin{align*}
\Phi_{\delta_1,\delta_2}(\tau) & \lesssim_{\la}  \frac{1}{\delta_1} \| b^{\eps}-\bar{b}^{\eps}\|_{L^{1}(\Omega')} + \frac{1}{\delta_1} \sigma(\eps) + \frac{1}{\delta_1} \bar{\sigma}(\eps)+ \frac{\delta_1}{\delta_2}\psi(\eps)\log \left( \frac{1}{\frac{\delta_1}{\delta_2}\psi(\eps)\delta_1} \right)+ C  \\
& \lesssim \frac{1}{\delta_1}\| b-\bar{b}\|_{L^{1}(\Omega')}+ \frac{1}{\delta_1} \big[\sigma(\eps)+\bar{\sigma}(\eps)\big]+ \frac{\delta_1}{\delta_2}\psi(\eps)\log \left( \frac{1}{\frac{\delta_1}{\delta_2}\psi(\eps)\delta_1} \right) + C\,,
\end{align*}
where $\sigma(\eps) = \| b-b^{\eps}\|_{L^{1}(\Omega')}$ and $\bar{\sigma}(\eps)=\| \bar{b}-\bar{b}^{\eps}\|_{L^{1}(\Omega')}$ tend to $0$ as $\eps \rightarrow 0$. Lemma~\ref{lemma_W^{s,p}} implies that
\begin{equation}
\label{equinormWs,1}
\sigma(\eps) + \bar{\sigma}(\eps) \lesssim \left(\|b_2\|_{L^{1}_{t,x_2}W^{\alpha,1}_{x_1}}+\|\bar{b}_2\|_{L^{1}_{t,x_2}W^{\alpha,1}_{x_1}}\right)\eps^{\alpha} \quad \text{and} \quad \psi(\eps) \lesssim \left(\|b_2\|_{L^{1}_{t,x_2}W^{\alpha,1}_{x_1}} \right) \eps^{\alpha-1}. 
\end{equation}
Therefore 
\begin{align*}
& \mathcal{L}^{N}\left(B_r \cap \left\lbrace |X(s,\cdot)-\bar{X}(s,\cdot)|>\gamma \right\rbrace \right) \\
& \lesssim_{\la} \frac{\| b-\bar{b} \|_{L^{1}(\Omega')}}{\delta_1 \log(1+\frac{\gamma}{\delta_2})}+ \frac{\sigma(\eps)+\bar{\sigma}(\eps)}{\delta_1\log(1+\frac{\gamma}{\delta_2})} + \frac{\frac{\delta_1}{\delta_2}\psi(\eps)\log \left( \frac{1}{\frac{\delta_1}{\delta_2}\psi(\eps)\delta_1} \right)}{\log(1+\frac{\gamma}{\delta_2})}+ \frac{C}{\log(1+\frac{\gamma}{\delta_2})} \\
& \hspace{10cm}  + \mathcal{L}^{N}(B_r\setminus G_{\la})+ \mathcal{L}^{N}(B_r\setminus \bar{G}_{\la})   \\
& \lesssim \frac{\| b-\bar{b} \|_{L^{1}((0,T)\times B_{\la})}}{\delta_1 \log(1+\frac{\gamma}{\delta_2})}+ \frac{\eps^{\alpha}}{\delta_1\log(1+\frac{\gamma}{\delta_2})} + \frac{\frac{\delta_1}{\delta_2}\eps^{\alpha-1}\log \left( \frac{1}{\frac{\delta_1}{\delta_2}\eps^{\alpha-1}\delta_1} \right)}{\log(1+\frac{\gamma}{\delta_2})}+\frac{C}{\log(1+\frac{\gamma}{\delta_2})}  \\
& \hspace{10cm} + \mathcal{L}^{N}(B_r\setminus G_{\la})+ \mathcal{L}^{N}(B_r\setminus \bar{G}_{\la})  \\
& = \frac{\| b-\bar{b} \|_{L^{1}((0,T)\times B_{\la})}}{\delta_1 \log(1+\frac{\gamma}{\delta_2})} + 1)+2)+3)+4)+5) \,. 
\end{align*}

\medskip

\noindent\textbf{Step 6:} \textit{Choice of the parameters and conclusion.} Fix $\eta>0$. By choosing $\lambda$ sufficiently large we can make $4) + 5) \leq 2\eta/5$.

Define
$$
\beta = \frac{\delta_1}{\delta_2} \ll 1\,, \qquad \text{ so that $\delta_1 = \beta \delta_2$.}
$$
We need to choose $\eps>0$, $\beta>0$, and $\delta_2>0$ in such a way that
$$
1) + 2) + 3) =  \frac{\eps^{\alpha}}{\beta\delta_2\log(1+\frac{\gamma}{\delta_2})} + \frac{\beta\eps^{\alpha-1}\log \left( \frac{1}{\beta^2\eps^{\alpha-1}\delta_2} \right)}{\log(1+\frac{\gamma}{\delta_2})}+\frac{C}{\log(1+\frac{\gamma}{\delta_2})}  \leq \frac{3\eta}{5}\,.
$$
The term $3)$ can be made smaller than $\eta/5$ by choosing $\delta_2>0$ sufficiently small. We fix $0<\mu<1$ to be determined later (depending on the exponent $\alpha>1/2$ in assumption {\bf (R2)} only) and choose $\eps>0$ such that
$$
\eps^{\alpha-1}=\beta^{\mu-1}\,, \qquad \text{ that is, } \eps=\beta^{\frac{1-\mu}{1-\alpha}}\,.
$$
In this way we get
$$
2) = 
\frac{\beta^\mu\log \left( \frac{1}{\beta^{\mu+1}\delta_2} \right)}{\log(1+\frac{\gamma}{\delta_2})} 
= \frac{\beta^\mu\log \left( \frac{1}{\beta^{\mu+1}} \right)}{\log(1+\frac{\gamma}{\delta_2})}
+ \frac{\beta^\mu\log \left( \frac{1}{\delta_2} \right)}{\log(1+\frac{\gamma}{\delta_2})} \,,
$$
which can be made smaller that $\eta/5$ if $\beta>0$ is chosen to be small enough.

With the above choices the term $1)$ becomes
$$
1) = \frac{\beta^{\frac{1-\mu}{1-\alpha}\, \alpha}}{\beta\delta_2\log(1+\frac{\gamma}{\delta_2})}
= \frac{\beta^{\frac{2\alpha-\alpha\mu-1}{1-\alpha}}}{\delta_2\log(1+\frac{\gamma}{\delta_2})}\,,
$$
which can be made smaller than $\eta/5$ by a suitable choice of $\beta>0$, provided the exponent of $\beta$ at the numerator is positive, that is,
\begin{equation}\label{e:choicemu}
\frac{2\alpha-\alpha\mu-1}{1-\alpha} > 0
\qquad \Longleftrightarrow \qquad
\alpha> \frac{1}{2-\mu}\,.
\end{equation}
Since $\alpha>1/2$, we see that we can choose $\mu>0$ small enough in such a way that~\eqref{e:choicemu} holds. This gives $1)+2)+3)+4)+5)\leq \eta$ and therefore concludes the proof. \end{proof}

\subsection{Well-posedness and further properties of the Lagrangian flow} 
\label{ss:coroll}

Estimate~\eqref{e:mainest} in Theorem~\ref{theorem} is the key information which guarantees existence, uniqueness, and stability of the regular Lagrangian flow. The proof of these results as a consequence of estimate~\eqref{e:mainest} is by now quite standard, see the theory developed in~\cite{CDL,BC,BBC}. We begin with the uniqueness.

\begin{corollary}[Uniqueness]
Let $b$ be a vector field satisfying assumptions {\bf (R1)}, {\bf (R2)}, and {\bf (R3)}. Then, the regular Lagrangian flow associated to $b$, if it exists, is unique.
\end{corollary}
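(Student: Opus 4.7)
The plan is to apply Theorem~\ref{theorem} with $\bar b = b$. Concretely, I would let $X$ and $\bar X$ be two regular Lagrangian flows associated to the same vector field $b$, with compressibility constants $L$ and $\bar L$ respectively. Since $b$ satisfies {\bf (R2)}, in particular its second component lies in $L^{1}\big((0,T)\times\R^{n_2}_{x_2} ; W^{\alpha,1}_{x_1}(\R^{n_1})\big)$, so the assumption required on $\bar b_2$ in the hypotheses of Theorem~\ref{theorem} is automatically satisfied by taking $\bar b = b$. Assumptions {\bf (R1)} and {\bf (R3)} carry over to $\bar b$ trivially.

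With this choice, the main estimate~\eqref{e:mainest} reads
\begin{equation*}
\mathcal{L}^{N}\left(B_r \cap \left\{ |X(s,\cdot)-\bar{X}(s,\cdot)|>\gamma \right\} \right) \leq C_{\gamma,r,\eta} \| b-b \|_{L^{1}((0,T)\times B_{\la})}+ \eta = \eta
\end{equation*}
for every $s\in[0,T]$, every $\gamma,r,\eta>0$, and a suitably chosen $\lambda>0$.

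Since $\eta>0$ is arbitrary, we conclude $\mathcal{L}^{N}\left(B_r \cap \{ |X(s,\cdot)-\bar{X}(s,\cdot)|>\gamma \}\right)=0$ for every $s$, every $r>0$, and every $\gamma>0$. Sending $r\to\infty$ and then $\gamma\to 0$ (along a countable sequence) yields $X(s,\cdot)=\bar X(s,\cdot)$ almost everywhere in $\R^N$, for every $s\in[0,T]$, which is the desired uniqueness.

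There is no real obstacle here: the whole content of the uniqueness statement is packaged in Theorem~\ref{theorem}, and the corollary is just the observation that taking $\bar b = b$ kills the $\| b-\bar b\|_{L^1}$ term on the right-hand side. The only minor point to check is that the hypotheses of the theorem are symmetric enough that $\bar b = b$ is an admissible choice, which is the case since {\bf (R2)} implies the $W^{\alpha,1}_{x_1}$ integrability required on $\bar b_2$.
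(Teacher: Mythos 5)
Your proof is correct and follows exactly the same route as the paper: apply the main estimate~\eqref{e:mainest} with $\bar b = b$, note that the $\| b - \bar b\|_{L^1}$ term vanishes, and use the arbitrariness of $\eta$ (and then of $r$ and $\gamma$) to conclude $X = \bar X$ a.e. You have merely written out the details that the paper leaves implicit, including the easy check that $\bar b = b$ satisfies the hypotheses of Theorem~\ref{theorem}.
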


It is indeed very easy to see that uniqueness follows from estimate~\eqref{e:mainest}. We consider $b = \bar b$, then the right hand side of~\eqref{e:mainest} can be made arbitrarily small, for any $\gamma>0$ fixed. This readily implies uniqueness. 

\begin{remark} 
We observe that, in contrast to the PDE theory in~\cite{LBL,Ler1,Ler2}, no assumptions on the divergence of the vector field are required for the uniqueness of the regular Lagrangian flow. The divergence will play a role for the existence only. 
\end{remark} 

The main advantage of the quantitative theory of ODEs, in contrast to the PDE theory, is that it provides an explicit rate for the compactness and the stability, depending on the uniform bounds that are assumed on the sequence of vector fields. The following two results can be proven arguing as in~\cite{BC}, as a consequence of the main estimate~\eqref{e:mainest}. 

\begin{corollary}[Stability]
Let $\{b_n\}$ be a sequence of vector fields satisfying assumption {\bf (R1)}, converging in $L^{1}_{\loc}([0,T]\times \R^{N})$ to a vector field $b$ which satisfies assumptions {\bf (R1)}, {\bf (R2)}, and {\bf (R3)}. Assume that there exist $X_n$ and $X$ regular Lagrangian flows associated to $b_n$ and $b$ respectively, and denote by $L_n$ and $L$ the compressibility constants of the flows. Suppose that: 
\begin{itemize}
\item For some decomposition $b_n/(1+|x|)={c}_{n,1}+{c}_{n,2}$ as in assumption {\bf (R1)}, we have that 
$$
\| {c}_{n,1}\|_{L^{1}_t(L^{1}_x)}+ \|{c}_{n,2}\|_{L^{1}_t(L^{\infty}_x)} \quad \textit{is equi-bounded;}
$$
\item The sequence $\{L_n\}$ is equi-bounded;
\item The norm of $b_{n,2}(s,x_1,x_2)$ in $L^{1}\big((0,T)\times\R^{n_2}_{x_2} ; W^{\alpha,1}_{x_1}(\R^{n_1})\big)$ is equi-bounded.
\end{itemize}
Then the sequence $\{X_n\}$ converges to $X$ locally in measure in $\R^{N}$, uniformly with respect to time.
\end{corollary}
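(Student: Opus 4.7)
The plan is to apply the main estimate~\eqref{e:mainest} of Theorem~\ref{theorem} to each pair $(b, b_n)$, with $b$ playing the role of the vector field with full regularity and $b_n$ playing the role of $\bar b$, and then to exploit the hypothesis $b_n\to b$ in $L^{1}_{\loc}$ to conclude.

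First I would verify that the hypotheses of Theorem~\ref{theorem} are satisfied for each $n$. The limit $b$ satisfies (R1), (R2), and (R3) by assumption, while $b_n$ needs only to satisfy (R1) together with $b_{n,2} \in L^{1}\big((0,T)\times\R^{n_2}_{x_2}; W^{\alpha,1}_{x_1}(\R^{n_1})\big)$; both follow from the bulleted hypotheses. The theorem then yields, for every $\gamma,r,\eta>0$, a parameter $\lambda>0$ and a constant $C^{(n)}_{\gamma,r,\eta}>0$ such that
\[
\mathcal{L}^{N}\bigl( B_r \cap \{|X(s,\cdot)-X_n(s,\cdot)|>\gamma\} \bigr) \leq C^{(n)}_{\gamma,r,\eta} \, \|b-b_n\|_{L^{1}((0,T)\times B_\lambda)} + \eta
\]
for all $s\in[0,T]$.

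The crucial step is to argue that $C^{(n)}_{\gamma,r,\eta}$ can be taken uniform in $n$. Inspecting the dependencies listed at the end of Theorem~\ref{theorem}, the constant depends on $b$ only through fixed quantities, and on $b_n$ only through the compressibility constant $L_n$, the norm of $b_{n,2}$ in $L^{1}_{t,x_2}W^{\alpha,1}_{x_1}$, and the norms in a (R1) decomposition of $b_n/(1+|x|)$. Each of these families is equi-bounded by the three bulleted assumptions, so we may replace $C^{(n)}_{\gamma,r,\eta}$ by a single constant $C_{\gamma,r,\eta}$ independent of $n$. This bookkeeping of dependencies is really the only nontrivial point; it is exactly what Theorem~\ref{theorem} was engineered to deliver, so there is no serious analytic obstacle.

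Finally, using that $b_n\to b$ in $L^{1}_{\loc}([0,T]\times\R^N)$ and that $B_\lambda$ is bounded, the first term on the right hand side tends to zero as $n\to\infty$. Hence
\[
\limsup_{n\to\infty} \sup_{s\in[0,T]} \mathcal{L}^{N}\bigl( B_r \cap \{|X(s,\cdot)-X_n(s,\cdot)|>\gamma\} \bigr) \leq \eta\,,
\]
and since $\eta>0$ and $\gamma,r>0$ are arbitrary, the left hand side vanishes. This is exactly convergence of $X_n$ to $X$ locally in measure on $\R^{N}$, uniformly with respect to $s\in[0,T]$, as desired.
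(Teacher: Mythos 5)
Your proof is correct and follows exactly the route the paper intends (the paper itself only says the corollaries follow ``arguing as in \cite{BC}, as a consequence of the main estimate~\eqref{e:mainest}''). The assignment of roles is right: the limit $b$ is the vector field required to satisfy {\bf (R2)}--{\bf (R3)}, and $b_n$ plays the part of $\bar b$, which only needs {\bf (R1)} plus the fractional regularity $b_{n,2}\in L^1_{t,x_2}W^{\alpha,1}_{x_1}$.

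One point you should make explicit: not only the constant $C^{(n)}_{\gamma,r,\eta}$, but also the radius $\lambda$ produced by Theorem~\ref{theorem}, could a priori depend on $n$, and this matters for your final limit, because $\|b-b_n\|_{L^1((0,T)\times B_{\lambda_n})}\to 0$ is not guaranteed under mere $L^1_{\loc}$ convergence if $\lambda_n\to\infty$. Fortunately $\lambda$ is chosen in Step~6 of the proof of Theorem~\ref{theorem} only so that the sublevel-set terms $\mathcal{L}^N(B_r\setminus G_\lambda)$ and $\mathcal{L}^N(B_r\setminus \bar G_\lambda)$ are small, and by Lemma~\ref{estsuper} these are controlled by a function $g(r,\lambda)$ depending solely on the compressibility constants and the $L^1_tL^1_x$ and $L^1_tL^\infty_x$ norms in the {\bf (R1)} decompositions; all of these are equi-bounded in $n$ by your first two bullet assumptions. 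Hence $\lambda$ can indeed be fixed independently of $n$, and with that added remark the argument is complete.
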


In the above corollary, the assumption in the third bullet is necessary in order to have a uniform estimate on the quantity $\sigma_n(\eps)$ associated to $b_n$ (as in the proof of Theorem~\ref{theorem}). 

\begin{corollary}[Compactness]\label{compactness}
Let $\{b_n\}$ be a sequence of vector fields satisfying assumption {\bf (R1)}, {\bf (R2)}, and {\bf (R3)}, converging in $L^{1}_{\loc}([0,T]\times \R^{N})$ to a vector field $b$ which satisfies assumptions {\bf (R1)}, {\bf (R2)}, and {\bf (R3)}. Assume that there exist $X_n$ regular Lagrangian flows associated to $b_n$, and denote by $L_n$ the compressibility constants of the flows. Suppose that: 
\begin{itemize}
\item For some decomposition $b_n/(1+|x|)={c}_{n,1}+{c}_{n,2}$ as in assumption {\bf (R1)}, we have that 
$$
\| {c}_{n,1}\|_{L^{1}_t(L^{1}_x)}+ \|{c}_{n,2}\|_{L^{1}_t(L^{\infty}_x)} \quad \textit{is equi-bounded;}
$$
\item The sequence $\{L_n\}$ is equi-bounded;
\item The norms of the vector fields $\{b_n\}$ involved in the assumptions {\bf (R2)} and {\bf (R3)} are equi-bounded. 
\end{itemize}
Then the sequence $\{X_n\}$ is pre-compact locally in measure in $\R^{N}$, uniformly with respect to time, and converges to a regular Lagrangian flow $X$ associated to $b$.
\end{corollary}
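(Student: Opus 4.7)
The plan is to apply Theorem \ref{theorem} to pairs $(b_n, b_m)$, with $b_n$ playing the role of $b$ and $b_m$ the role of $\bar b$ (both vector fields satisfy \textbf{(R1)}, \textbf{(R2)}, \textbf{(R3)}, but only regularity \textbf{(R1)} is needed on one of them to apply the theorem, while the other plays the role of the ``good'' vector field). Thanks to the three equi-boundedness hypotheses, every bound entering the constant in \eqref{e:mainest} is uniform in $n,m$. Fixing $\gamma,r,\eta>0$, we obtain $\lambda>0$ and $C_{\gamma,r,\eta}$ independent of $n,m$ such that
$$
\mathcal{L}^{N}\bigl(B_r \cap \{|X_n(s,\cdot)-X_m(s,\cdot)|>\gamma\}\bigr) \leq C_{\gamma,r,\eta}\|b_n-b_m\|_{L^1((0,T)\times B_\lambda)} + \eta
$$
for every $s \in [0,T]$.

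Since $b_n \to b$ in $L^1_{\loc}([0,T]\times\R^N)$, the sequence is Cauchy in $L^1((0,T)\times B_\lambda)$, so the first term on the right-hand side vanishes as $n,m\to\infty$. As $\gamma,\eta>0$ are arbitrary, $\{X_n\}$ is a Cauchy sequence locally in measure in $\R^N$, uniformly in $s \in [0,T]$, and admits a limit $X \in C([0,T]; L^0_{\loc}(\R^N))$. This gives the pre-compactness claim.

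It remains to identify $X$ as the regular Lagrangian flow of $b$, i.e., to check the three items of Definition \ref{d:rlf}. The initial condition passes to the limit immediately. For the compressibility, I would test $\int \varphi(X_n(s,x))\,dx \leq L_n \int \varphi(x)\,dx$ against continuous, nonnegative, compactly supported $\varphi$, use local convergence in measure of $X_n$ to $X$ (up to subsequences, for pointwise a.e.\ convergence in $x$) together with Fatou's lemma, and extract the constant $L := \liminf_n L_n$, which is finite by the second bullet. For the renormalized ODE
$$
\partial_s \beta(X_n(s,x)) = \beta'(X_n(s,x))\, b_n(s,X_n(s,x))
$$
in $\mathcal{D}'((0,T)\times \R^N)$, the only nontrivial passage to the limit is showing that $\beta'(X_n)\,b_n(X_n) \to \beta'(X)\,b(X)$ in $L^1_{\loc}$ (or at least against test functions).

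The main obstacle is precisely this last convergence, and I would handle it by a standard three-step splitting as in \cite{CDL,BC,BBC}. First, $\|\beta'(X_n)\bigl[b_n(X_n)-b(X_n)\bigr]\|_{L^1_{\loc}}$ is controlled by $\|\beta'\|_\infty L_n \|b_n-b\|_{L^1_{\loc}}$ via a change of variables along $X_n$, using the equi-compressibility of $L_n$, and vanishes by the assumed $L^1_{\loc}$ convergence. Second, $b$ itself is only $L^1_{\loc}$, so I would approximate it by a sequence of continuous vector fields $b^k \to b$ in $L^1_{\loc}$: the error $\beta'(X_n)[b(X_n)-b^k(X_n)]$ is again controlled via compressibility by $L_n\|b-b^k\|_{L^1_{\loc}}$, and the same holds at the limit replacing $L_n$ with $L$. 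Third, for continuous $b^k$, the continuity of $b^k$ and $\beta'$ together with convergence in measure of $X_n \to X$ yield $\beta'(X_n) b^k(X_n) \to \beta'(X) b^k(X)$ in measure; equi-integrability (again from compressibility combined with the growth bound \textbf{(R1)}) upgrades this to $L^1_{\loc}$ convergence. Diagonalizing in $k$ concludes the identification of $X$ as a regular Lagrangian flow of $b$, and uniqueness from the previous corollary guarantees that the whole sequence $\{X_n\}$ (not just a subsequence) converges to $X$.
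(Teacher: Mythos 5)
Your proposal is correct and follows exactly the strategy the paper invokes (arguing ``as in~\cite{BC}''): apply the stability estimate~\eqref{e:mainest} to pairs $(b_n,b_m)$, use the three equi-boundedness hypotheses to make the constant $C_{\gamma,r,\eta}$ and the radius $\lambda$ uniform in $n,m$, conclude that $\{X_n\}$ is Cauchy locally in measure uniformly in time, and then pass to the limit in the three items of Definition~\ref{d:rlf} by the standard change-of-variables/approximation-by-continuous-fields argument. The only remark worth making is that the Cauchy-sequence argument already forces the \emph{whole} sequence to converge, so the final appeal to the uniqueness corollary is not actually needed for full-sequence convergence (though it does no harm, and is useful if one prefers to extract a.e.-convergent subsequences at intermediate steps of the identification).

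Two small points of care you correctly glossed over but that a fully detailed write-up should address: the Definition~\ref{d:rlf} bound on $\beta'$, namely $|\beta'(z)|\leq C/(1+|z|)$, is what makes $\beta'$ bounded, justifying the factor $\|\beta'\|_\infty$ in the first splitting step; and in the third step, the upgrade from convergence in measure to $L^1_{\loc}$ convergence of $\beta'(X_n)b^k(X_n)$ is easiest to justify by noting that for fixed $k$ the integrand is already bounded on the relevant compact set, so no extra equi-integrability input is actually required there (equi-integrability from~\textbf{(R1)} and the equi-bounded $L_n$ is instead what controls the contribution from outside the sublevels $G_\lambda$, as in Lemma~\ref{estsuper}).
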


By a simple regularization procedure Corollary~\ref{compactness} implies existence of the regular Lagrangian flow, under the assumption of boundedness of the divergence of the vector field. Such an assumption is needed in order to have equi-boundedness of the compressibility constants for the sequence of approximated regular Lagrangian flows
$X_n$ in Corollary~\ref{compactness}. 

\begin{corollary}[Existence]
Let $b$ be a vector field satisfying assumptions {\bf (R1)}, {\bf (R2)}, and {\bf (R3)}. Assume that the (distributional) spatial divergence of $b$ is bounded. Then, there exists a regular Lagrangian flow associated to $b$.
\end{corollary}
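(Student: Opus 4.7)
The plan is to apply Corollary~\ref{compactness} to a sequence of smooth approximations of $b$ for which the classical flow theory provides regular Lagrangian flows whose compressibility constants are equi-bounded thanks to the boundedness of the divergence.

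First I would fix a standard mollifier $\rho^n(x_1,x_2)$ (and a time mollifier if needed) and set $b^n = b \ast \rho^n$, the convolution being taken in all space variables. A key observation is that, since $b_1$ does not depend on $x_2$, the convolved vector field still has the structure $b^n = (b_1^n(t,x_1), b_2^n(t,x_1,x_2))$: indeed $b_1^n$ is the convolution of $b_1$ with the marginal $\tilde\rho^n(y_1) = \int \rho^n(y_1,y_2)\,dy_2$ in the variable $x_1$ alone. Consequently $b^n$ still satisfies the partial regularity structure in~\eqref{e:partialfield}, and standard properties of convolution yield uniform bounds on the norms appearing in {\bf (R2)} and {\bf (R3)}, together with convergence $b^n \to b$ in $L^1_{\loc}([0,T]\times\R^N)$. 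The growth decomposition in {\bf (R1)} survives as well: using that $\frac{1+|x-y|}{1+|x|} \leq 1 + |y|$ is uniformly bounded on $\supp \rho^n$, one can write $b^n/(1+|x|) \lesssim |c_1|^n + |c_2|^n$, preserving the $L^1_t L^1_x + L^1_t L^\infty_x$ decomposition with uniform constants.

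Since each $b^n$ is smooth in the space variable with an integrable-in-time Lipschitz constant on compact sets and satisfies the growth bound coming from {\bf (R1)}, the classical Cauchy--Lipschitz theory produces a unique classical flow $X_n$ which is in particular a regular Lagrangian flow in the sense of Definition~\ref{d:rlf}. The Liouville formula gives explicitly
\[
\det \nabla_x X_n(s,x) = \exp\left( \int_0^s \dive b^n(\tau, X_n(\tau,x))\,d\tau\right),
\]
and since $\dive b^n = (\dive b)^n$, the assumed boundedness of $\dive b$ implies $\|\dive b^n\|_\infty \leq \|\dive b\|_\infty$ uniformly in $n$. By the change of variables formula this gives a compressibility constant $L_n \leq \exp(T\|\dive b\|_\infty)$ that is equi-bounded in $n$.

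With the uniform bounds on {\bf (R1)}, {\bf (R2)}, {\bf (R3)}, and on $L_n$ just established, all hypotheses of Corollary~\ref{compactness} are in force. Applying it yields a subsequence $X_{n_k}$ converging locally in measure, uniformly in time, to a regular Lagrangian flow $X$ associated with $b$, which is the desired existence statement. The only point that requires a bit of care, and constitutes the main technical issue, is the verification that the structural partial-regularity assumption {\bf (R2)} and the decomposition in {\bf (R1)} are preserved by the chosen convolution with \emph{uniform} quantitative bounds; once this is done the existence follows automatically from the quantitative stability already established.
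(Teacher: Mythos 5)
Your proof is correct and follows exactly the route the paper indicates: the paper merely remarks that ``by a simple regularization procedure Corollary~\ref{compactness} implies existence,'' noting that the divergence bound is what yields equi-bounded compressibility constants. You have filled in precisely those details, including the key structural observation that spatial mollification preserves the fact that the first component is independent of~$x_2$, the Liouville argument giving $L_n\leq \exp(T\|\dive b\|_\infty)$, and the verification that {\bf (R1)}, {\bf (R2)}, {\bf (R3)} hold for $b^n$ with uniform constants.
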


\begin{remark} 
Arguing as in~\cite{BC}, it is also possible to develop a theory of Lagrangian solutions of the continuity equations, that is, solutions that are transported by the regular Lagrangian flow.
\end{remark}

\subsection{Remarks and possible extensions} 

We conclude by listing a few remarks and questions concerning the results and the approach in this work:
\begin{itemize}
\item[(1)] The same proof for Theorem~\ref{theorem} works if we assume only local regularity bounds in assumption~{\bf (R2)}. We omitted this just for simplicity of notation.  
\item[(2)] Compared to the PDE theory in~\cite{LBL,Ler1,Ler2}, we need to assume some fractional Sobolev regularity of $b_2$ with respect to the variable $x_1$. This seems unavoidable for our strategy of proof, since we cannot send to zero the two parameters $\delta_1$ and $\delta_2$ one after the other, but we rather need to send them together to zero, under a condition on their ratio $\beta = \delta_1/\delta_2$. Is it possible to modify our proof and remove this assumption, that is, is it possible to derive an estimate like~\eqref{e:mainest} under the only assumption of integrable depencence of $b_2$ with respect to $x_1$?
\item[(3)] Is it possible to treat the case $p=1$ in assumption~{\bf (R2)}? We briefly explain here what is the obstruction with the present approach. In the case $p=1$, in Step~4 of the main proof the operators $U_{\tilde{r}^{\eps}}$ and $U_{\tilde{p}}$ cannot be directly estimated in $L^1$ as in~\eqref{L11} and~\eqref{L12} (recall Lemma~\ref{LemmaUbounds}). One needs to argue as done in the same step for $U_{\tilde{q}^{\eps}}$ exploiting the equi-integrability and the interpolation from Lemma~\ref{l:inter}. After some computations we would obtain that, for every $\theta>0$, there is a constant $C_\theta>0$ so that the term
$$
\frac{C}{\log(1+\frac{\gamma}{\delta_2})} 
$$
in the last estimate at the end of Step~5 is replaced by the sum
$$
\frac{\theta \log\left(\frac{1}{\theta \beta\delta_2}\right)}{\log(1+\frac{\gamma}{\delta_2})}  
+\frac{C_{\theta}}{\log(1+\frac{\gamma}{\delta_2})} \,.
$$
We need to make also this sum small, exploiting the arbitrariness of $\theta$. We see that, in order to make the first term small, we need to take $\theta$ coupled to $\beta$. 
Choosing $\eps^{\alpha-1}=\beta^{\mu-1}$ as in the proof of Theorem~\ref{theorem}, we see that we still have $\beta$ and $\delta_2$ as free parameters, and eventually we need to make small the sum
$$
\frac{\beta^{\frac{2\alpha-\alpha\mu-1}{1-\alpha}}}{\delta_2\log(1+\frac{\gamma}{\delta_2})}
+
\frac{C_{\beta}}{\log(1+\frac{\gamma}{\delta_2})}
$$
(as now $\theta$ is coupled to $\beta$). However, since $C_\beta$ blows up for $\beta\to 0$ (depending on the equi-integrability rate), with this strategy there is in general no choice of such parameters which makes the last sum small.
\item[(4)] Can one relax the strong requirement that $b_1$ does not depend on the variable $x_2$, and require instead (for instance) that $b_1$ has a smooth dependence on $x_2$?
\end{itemize}

%%%%%%%%%%%%%%%%%%%%%%%%%% 

\bibliographystyle{plain}

\adresse

 \end{document}